\newcommand*{\R}{\ensuremath{\mathbb{R}}}
\newcommand*{\Q}{\ensuremath{\mathbb Q}}
\newcommand*{\N}{\ensuremath{\mathbb{N}}}
\newcommand*{\Z}{\ensuremath{\mathbb{Z}}}
\newcommand*{\borel}{\ensuremath{\mathfrak{B}}}
\newtheorem{theorem}{Theorem}
\newtheorem{definition}[theorem]{Definition}
\newtheorem{corollary}[theorem]{Corollary}
\newtheorem{lemma}[theorem]{Lemma}
\newtheorem{proposition}[theorem]{Proposition}
\newtheorem{problem}[theorem]{Problem}
\begin{document}
\title{Notes on scale-invariance and base-invariance for Benford's Law}
\keywords{Benford's Law, scale-invariance,
uniform distribution modulo 1,
mantissa distribution, significand distribution, base-invariance,
Furstenberg's conjecture, zeros of characteristic functions}
\subjclass{Primary: 60-02; Secondary: 60E10.}
\author{Michał Ryszard Wójcik}
\address{University of Wrocław, Institute of Geography and Regional Development}
\email{michal.ryszard.wojcik@gmail.com}

\begin{abstract}
It is known that if $X$ is uniformly distributed modulo 1
and $Y$ is an arbitrary random variable independent of $X$
then $Y+X$ is also uniformly distributed modulo 1.
We prove a converse for any continuous random variable $Y$
(or a reasonable approximation to a continuous random variable)
so that if $X$ and $Y+X$ are equally distributed modulo 1
and $Y$ is independent of $X$ then
$X$ is uniformly distributed modulo 1
(or approximates the uniform distribution equally reasonably).
This translates into a characterization of Benford's law
through a generalization of scale-invariance:
from multiplication by a constant to multiplication
by an independent random variable.

We also show a base-invariance characterization:
if a positive continuous random variable has the same
significand distribution for two bases then it is Benford
for both bases. The set of bases for which a random variable
is Benford is characterized through characteristic functions.
\end{abstract}
\maketitle

\section{Introduction}
Before the early 1970s, handheld electronic calculators were not yet in widespread use
and scientists routinely used in their calculations books with tables containing the decimal logarithms of
numbers between 1 and 10 spaced evenly with small increments like 0.01 or 0.001.
For example, the first page would be filled with numbers 1.01, 1.02, 1.03, \ldots, 1.99
in the left column and their decimal logarithms in the right column,
while the second page with 2.00, 2.01, 2.02, 2.03, \ldots, 2.99, and so on
till the ninth page with 9.00, 9.01, 9.02, 9.03, \ldots, 9.99,
with the decimal logarithms in the right columns increasing from 0 to 1 throughout the nine pages.

Back in 1881, astronomer and mathematician Simon Newcomb published a two-page note \cite{newcomb}
which started with the observation that the earlier pages in books with logarithmic tables
were more worn by use than the later pages,
giving evidence to the empirical fact that numbers occurring in the scientific analysis of nature
(which he calls {\em natural numbers})
are more likely to begin with lower digits than higher digits,
disregarding the initial zeros for numbers less than 1.
He also presented a very short heuristic argument to the effect that
for {\em natural numbers}
(that is those occurring in the scientific analysis of nature)
the fractional part of the decimal logarithms is distributed evenly in the interval between 0 and 1,
from which he derived the frequencies with which each digit 1, 2, \ldots, 9 appears
as the first or most significant in {\em natural numbers} --- or nature's numbers
not to be confused with the positive integers.
According to his heuristic rule, nature's numbers begin with the digit 1 about 30\% of the time
while the first digit is 9 only about 5\% of the time.

This subject was taken up over fifty years later, in 1937, by
electrical engineer and physicist Frank Benford who
published a long (22 pages) article in a philosophical journal \cite{benford}, where he analyzed
the frequencies of first digits for real-world data (about 20,000 numbers) which he personally collected
from diverse fields including three types of geographical data (rivers, areas, populations).
He found in each type of data a reasonable approximation to
the logarithmic distribution that was heuristically derived by Simon Newcomb
and seriously attempted to provide an explanation for what appeared to be a widely applicable statistical law of nature,
to be later known as Benford's Law.

These two early publications by Simon Newcomb and Frank Benford are a must-read for anyone interested in the subject.
In 1976, after many papers on this subject were written, 
Ralph A.~Raimi published an excellent survey \cite{raimi}
of the philosophical and mathematical efforts
to derive Benford's Law from basic principles.
In 2011, there appeared a survey of Benford's Law literature \cite{basictheory}
written by Arno Berger and Theodore P. Hill, who both have made important contributions to this field.
They also maintain a website www.benfordonline.net,
keeping track of important literature on Benford's Law.

To this day, there is no general explanation why Benford's Law should be satisfied in so many diverse
types of statistical data and probably this would have to be solved on a case by case basis
with a limited number of different types of explanations to cover all the cases.

Intriguingly, an analysis of the digit frequencies in financial data can be used to detect fraud,
as was discovered by Mark J.~Nigrini --- 
a pioneer in applying Benford's Law to auditing and forensic accounting.
See his recent book {\em Benford's Law: Applications for Forensic Accounting, Auditing, and Fraud Detection}
from 2012, where he shows the widespread applicability of Benford's Law and its practical uses
to detect fraud, errors, and other anomalies.

In the 21st century, there appeared a number of articles from broadly understood Earth and environmental sciences
(geosciences or geophysical sciences) seeking application's of Benford's Law, \cite{geo1}, \cite{geo2}, \cite{geo3},
\cite{geo4}, \cite{geo5}, \cite{geo6}.
In \cite{geo4}, geophysical observables like length of time between geomagnetic reversals, depths of earthquakes,
seismic wavespeeds and others were found to reasonably conform to the first-digit law,
and a case of earthquake detection by first-digit analysis alone was reported.

The applicability of Benford's Law to natural sciences has been a kind of mystery from the start,
with two kinds of insights being presented as its justification. If there is any law
governing the distribution of first digits in nature's numbers then it should not be sensitive to
the choice of units, so that a large table of numbers given in miles after being recalculated to kilometers
should exhibit the same digit frequencies --- this is called scale-invariance.
Alternatively, such a law should not be dependent on the particular choice of the number 10
for the basis of our numerical system, which gives rise to the notion of base-invariance.
Both notions have been used to characterize Benford's law
as the only significand distribution which satisfies any of them.

This article is strictly mathematical, written with the sole purpose of reviewing the basic mathematics
behind the notions of scale-invariance and base-invariance.
Relevant known results are discussed: sometimes simplified and sometimes strengthened
to widen their applicability and to achieve greater elegance and clarity.

Scale-invariance is highly extended so that multiplication by
a constant is replaced with multiplication by any independent
continuous random variable. Scale-invariance being dependent
on the choice of base, dependence on the base is studied in detail
with the final conclusion that the perceived base-invariance
of Benford's Law as a stistical law of nature is due to
the approximating nature of limiting processes 
rather than exact because in the most natural cases Benford's law depends
on the choice of base and is not fully base-invariant.

Although the very first article by Simon Newcomb was written with full awareness of the arbitrary role
of the number 10 and although his heuristic argument explains the distribution of the logarithms
with no special role being played by digits, it has become usual for later authors to focus
on the first (or first two) decimal digits of numbers rather than on the distribution of their significand.
After all, grouping data into nine slots according to the first digit is just one arbitrary way
of making a histogram for the distribution of the significand.
Considering first two digits is just making a finer histogram.
What should really be studied is the distribution of the significand for many different bases,
not necessary being integers. These and other terminological matters are covered in the following preliminary section.

\subsection*{Notational conventions}
Throughout the article, $\N$ is the set of positive integers without zero and $\Z$ is the set of all integers,
while $\lambda$ is the Lebesgue measure.
For a random variable $X$, its characteristic function
is denoted $\varphi_X$.

\section{Terminological background for significand analysis}
Let us collect the basic facts and notational conventions
needed for an analysis of the distribution of the significand
(usually called the mantissa) of a random variable.

Let us fix a base $b\in(1,\infty)$.
The significand $S_b(x)$ of a positive number $x>0$  to base $b$ is
the unique element of the sequence
$$x,xb,xb^{-1},xb^2,xb^{-2},\ldots$$
which belongs to $[1,b)$. Explicitly,
\begin{equation}\label{significand}S_b(x)=xb^{-\lfloor\log_bx\rfloor},\end{equation}
where $$\lfloor t\rfloor=\max\{n\in\Z\colon n\le t\}$$
so that for each $t\in\R$ we have $$\lfloor t\rfloor\in\Z
\text{ and }\lfloor t\rfloor\le t<\lfloor t\rfloor+1.$$
Indeed,
$$0\le\log_bx-\lfloor\log_bx\rfloor<1\iff
b^0\le b^{\log_bx-\lfloor\log_bx\rfloor}<b^1\iff
1\le S_b(x)<b.$$
The significand is usually called the mantissa
in the literature on Benford's Law.
See the note after Definition 2.3 in \cite{basictheory}
for a discussion of the reasons why a change of terminology
is forthcoming.

For psychological reasons people often restrict their analysis
to base 10 and when they do consider other bases
they usually think of other integers. However,
any number $b\in(1,\infty)$ is good to be considered
as the base in significand analysis
See \cite{baseb} for an analysis of conformance
to Benford's Law in dependence on the base
or \cite{scaleinvariant} for a description of the set of all bases
in which a given random variable satisfies Benford's law.
See also \cite{schatte1981}.

A positive random variable $X\colon\Omega\to(0,\infty)$
is Benford (base $b$) (cf. \cite[Def.~3.4]{basictheory})
if \begin{equation}\label{d1}
P(S_b(X)\le t)=\log_bt\text{ \ \ \ for all }t\in[1,b)\end{equation}
or equivalently \begin{equation}\label{d2}
P(S_b(X)\le b^t)=t\text{ \ \ \ for all }t\in[0,1)\end{equation}
or equivalently \begin{equation}\label{d3}
\log_bS_b(X)\sim U[0,1)\end{equation}
or equivalently without the significand notation
\begin{equation}\label{d4}\log_bX-\lfloor\log_bX\rfloor\sim U[0,1)\end{equation}
or equivalently using the modulo notation: $x\bmod{1}:=x-\lfloor x\rfloor$
\begin{equation}\label{d5}(\log_bX)\bmod{1}\sim U[0,1).\end{equation}
The last line justifies why the study of the Benford property
can be conveniently carried out in the context of
random variables whose distribution is concentrated on $[0,1)$
with arithmetic operations considered modulo 1,
which is adopted throughout this article.

It is safer to talk about the Benford property of a random variable $X$
rather than talk in terms of $X$ satisfying Benford's Law
or the distribution of $X$ being Benford because
the Benford property determines the distribution of
the random variable $S_b(X)$ and not $X$ itself.
For example, if $U\sim U[c,d)$ with $d-c\in\N$
then each random variable of the form $b^U$
has the Benford property base $b$
but they all have different distributions.
Note that the density $f$ of such a $b^U$ is given by
$$f(x)=\frac{1}{(d-c)\ln b}\mathds{1}_{[b^c,b^d)}(x)
\frac{1}{x}.$$

In the English mathematical language, the words {\em law}
and {\em distribution} are used interchangeably,
which may be misleading because the word {\em law}
in natural sciences refers to so called laws of nature.
It should be clearly differentiated whether we are talking about
the observation (which may be referred to as Benford's Law)
that in many statistical datasets the empirical
distribution of the significand is close to having
the Benford property on the one hand,
and the mathematical fact that a particular random variable
has the Benford property on the other.
Therefore, I propose to write {\em Benford's Law}
with a capital L to refer to the observation concerning
the statistics collected by humans (a kind of law of nature)
and to write {\em Benford's law} with a lower-case l
to refer to this kind of distribution as a mathematical object.

\subsection*{A note on first digits}
For $b>1$, the interval $[1,b)$ can be divided into
a number of equal parts
as a basis for a histogram for a collection of numbers
lying between 1 and $b$, which is the case when we consider
the significand $S_b(X)$ for any random variable $X$
or a statistical collection of numbers.

If $b$ is an integer, we may have $b-1$ equal parts
$[1,2), [2,3), \ldots, [b-1,b)$.
With this kind of histogram we investigate the frequencies
of first digits in the dataset as we study the distribution
of the significand for the integer base $b$.
If Benford's law is satisfied for base $b\in(1,\infty)$, then
$$P(s\le S_b(X)<t)=\log_bt-\log_bs=\log_b\frac{t}{s}
\text{ \ \ for }1\le s<t\le b.$$
If $b=n\in\N$,
the frequency for a first digit $d\in\{1,2,\ldots,n-1\}$
is then given by $$P(d\le S_n(X)<d+1)=\log_n\frac{d+1}{d}
=\log_n\Big(1+\frac{1}{d}\Big),$$
which is the most familiar formulation of Benford's Law,
also known as the first-digit law.

It should be kept in mind that the choice of base is arbitrary
and there is no mathematical reason to consider only integers.
Any choice for a histogram can be made,
not necessarily based on first digits, or first two digits.

\section{Overview of the results}

\subsection*{The scale-invariance section}
Recall the definition of the significand to base $b$,
$S_b(x)=xb^{-\lfloor\log_bx\rfloor}$, (\ref{significand}).
For a positive real random variable $X\colon\Omega\to(0,\infty)$
and a fixed base $b\in(1,\infty)$ we study the distribution
of the significand $S_b(X)$.
Whether a random variable can have the Benford property
for two or more distinct bases at the same time
is discussed in the secion on base-invariance.

We will write $X$ is $b$-Benford as a shorthand for $\log_b(S_b(X))\sim U[0,1)$,
which is the Benford property for base $b$, cf.~(\ref{d1})-(\ref{d5}).
The following points summarize the key facts concerning scale-invariance
and Benford's law:
\begin{equation}\label{f1}
X\text{ is }b\text{-Benford }\implies aX\text{ is }b\text{-Benford for all }a>0
\end{equation}\begin{equation}\label{f2}
S_b(X)\sim S_b(aX)\text{ for some }a\text{ with }
\log_ba\not\in\Q\implies X\text{ is }b\text{-Benford}
\end{equation}\begin{equation}\label{f25}
S_b(X)\sim S_b(aX)\text{ for infinitely many constants }a
\implies X\text{ is }b\text{-Benford}
\end{equation}\begin{equation}\label{f3}
X\text{ is }b\text{-Benford}\implies YX\text{ is }b\text{-Benford}
\text{ for any independent }Y>0
\end{equation}\begin{equation}\label{f4}
S_b(X)\sim S_b(YX)\text{, }Y\text{ is continuous, independent of }X
\implies X\text{ is }b\text{-Benford}.
\end{equation}

Finally, the main contribution of this section, Theorem \ref{final},
is that if $Y$ is a reasonable approximation to a continuous variable then
\begin{equation}\label{f5}
S_b(X)\sim S_b(YX)\text{ and }Y\text{ has }N\text{ atoms and is independent of }X
\end{equation}
implies that $X$ is reasonably close to being $b$-Benford:
\begin{equation}\label{f6}
\sup_{t\in[1,b)}\big|P(S_b(X)\le t)-\log_bt\big|\le\frac{1}{N}.
\end{equation}

References for these points are provided:
\begin{enumerate}[ ]
\item (\ref{f1}) \cite[Th.~3]{sarkar}, Theorem \ref{filozof};
\item (\ref{f2}) \cite[Th~3.8]{baseinvariance},
\cite[Th.~4.13(ii)]{basictheory}, Theorem \ref{scalesummary};
\item (\ref{f25}) \cite{pinkham}, \cite{scaleinvariant},
Theorem \ref{gdtygfeexed};
\item (\ref{f3}) \cite[Satz~4.6]{verteilung}, \cite[Th.~4.13(i)]{basictheory},
\cite[p.~883]{appliedfourier}, \cite[IV]{hamming},
\cite[p.~64 (8.7)]{feller}, Theorem \ref{filozof};
\item (\ref{f4}) \cite[Satz 4.7]{verteilung}, a comment to \cite[Th.~2.2]{sumsmodulo}, Theorem \ref{scalesummary};
\item (\ref{f5})$\implies$(\ref{f6}) Theorem \ref{final}.
\end{enumerate}

Benford's law can be characterized as
the unique distribution of the significand that is invariant under multiplication
by an independent continuous variable.
A different characterizations of Benford's law is given in \cite{invariantsum}.
Benford's law can also be characterized by \cite[Th.~3.5]{baseinvariance}, which will be discussed in the base-invariance section, Theorem \ref{hills}.

\subsection*{The base-invariance section}
This time the focus is on the set of those bases $b>1$
for which a given positive random variable $X$ is $b$-Benford,
denoted $B_X$ and called the Benford spectrum of $X$:
\begin{equation}\label{definitionspectrum}
B_X=\{b\in(1,\infty)\colon X\text{ is }b\text{-Benford}\}.
\end{equation}
The following points summarize the key facts about
Benford spectra. Due to the identity
\begin{equation}S_b(x^a)=S_{b^{1/a}}(x)
\text{ \ \ for all }b>1,a>0\end{equation}
they are sometimes stated in two different but equivalent ways:
\begin{equation}\label{s1}B_X\text{ is bounded}\end{equation}
\begin{equation}\label{s2}b^{1/a}\in B_X\iff
b\in B_{X^a}\end{equation}
\begin{equation}\label{sq1}b\in B_X\implies\sqrt[n]{b}\in B_X
\text{ \ \ for all }n\in\N
\end{equation}\begin{equation}\label{sq2}
b\in B_X\implies{b}\in B_{X^n}
\text{ \ \ for all }n\in\N
\end{equation}\begin{equation}\label{hilll1}
P(S_b(X)=1)=0\text{ and }
S_b(X)\sim S_b(X^n)\text{ for all }n\in\N\implies b\in B_X
\end{equation}\begin{equation}\label{hilll2}
P(S_b(X)=1)=0\text{ and }
S_b(X)\sim S_{\sqrt[n]{b}}(X)\text{ for all }n\in\N\implies b\in B_X
\end{equation}\begin{equation}\label{etr64tef}
X\text{ has a density and }S_b(X)\sim S_\beta(X),1<b<\beta
\implies b,\beta\in B_X
\end{equation}\begin{equation}\label{gtftftccxgf}
X\text{ has a density }\implies
S_b(X^a)\to b^{U[0,1)}\text{ in distribution as }a\to\infty
\end{equation}\begin{equation}\label{gttfvccyduffjhg}
b\in B_X\iff\varphi_{\ln X}\Big(\frac{2\pi n}{\ln b}\Big)=0
\text{ \ \ for all }n\in\N
\end{equation}\begin{equation}\label{fde5ef5d}
(\forall b>1)(\exists X)\ B_X=(1,b]
\end{equation}\begin{equation}\label{gvcgvdjnki}
X=b^{U[c,d)}\implies B_X=
\big\{b^{\frac{d-c}{n}}\colon n\in\N\big\}
\end{equation}\begin{equation}\label{re4ere5er}
X,Y\text{ are independent}\implies B_X\cup B_Y\subset B_{XY}
\end{equation}

References for these points are provided:
\begin{enumerate}[ ]
\item (\ref{s1}) \cite{schatte1981},
\cite[p.~392]{baseb}, \cite[Lemma 1]{scaleinvariant},
\cite{lolbert},\\
Theorem \ref{boundedspectrum};
\item (\ref{sq1}), (\ref{sq2}) \cite[Lemma 1]{scaleinvariant},
Theorem \ref{hut6tfdfecbh};
\item (\ref{hilll1}), (\ref{hilll2}) \cite{baseinvariance},
\cite[Sec.~4.3]{basictheory}, Theorem \ref{hills};
\item (\ref{etr64tef}) Theorem \ref{baseinvchar};
\item (\ref{gtftftccxgf}) \cite{lolbert},
Corollary \ref{powers};
\item (\ref{gttfvccyduffjhg}) \cite[Th.~4]{scaleinvariant}, \cite{lolbert}, 
Theorem \ref{charbases};
\item (\ref{fde5ef5d}) \cite{schatte1981}, \cite[last page]{scaleinvariant},
Theorem \ref{intervalspectrum};
\item (\ref{gvcgvdjnki}) Theorem \ref{bhcbvtgtgve};
\item (\ref{re4ere5er}) \cite[last page]{scaleinvariant},
Theorem \ref{gsgsgsgsgs};
\end{enumerate}

The main contribution of this section is the base-invariance
characterization of Benford's law (\ref{etr64tef}),
which is derived from (\ref{gtftftccxgf}).
Another one is the fully general form of
(\ref{gttfvccyduffjhg}) and (\ref{re4ere5er}),
which required an additional assumption
of density with bounded variation 
in the original formulation by James V.~Whittaker
in \cite{scaleinvariant}.
The proofs were simplified and cleared of this surplus assumption.
Twenty five years after Whittaker,
Tamás Lolbert proved (\ref{gttfvccyduffjhg})
without any reference to his predecessor,
using the assumption of having a density
without the need for bounded variation.
Still, our Theorem \ref{charbases} is more general.

(\ref{hilll1}) or equivalently (\ref{hilll2})
is the famous characterization of Benford's law
due to Theodore P.~Hill, see Theorem \ref{hills}.
From (\ref{etr64tef})
with $\beta=b^n$, $n\in\N,n>1$ it can be modified
so that only one $n$ is required at the cost of
assuming additionally that $X$ has a density,
which is probably a known result in the context of
Furstenberg's Conjecture.

\section{Scale-invariance}
Scale-invariance is one of the most intriguing aspects of the Benford property. It is well-known
(e.g. \cite[Th.~3]{sarkar} from 1968) that
if $X\colon\Omega\to(0,\infty)$ is Benford base $b\in(1,\infty)$,
then for any constant $a>0$ the random variable $aX$ is also Benford base $b$.
This theorem becomes less mysterious when viewed as a consequence
of the fact that
\begin{equation}
Y\sim U[0,1)\implies
(c+Y)\bmod{1}\sim U[0,1)\text{ \ \ for any }c\in\R,
\end{equation}
which is simply the translation invariance of the Lebesque measure
on $[0,1)$ modulo 1.
In our case, $Y=\log_bX$ and $c=\log_ba$.

Now that we are focused on the Lebesgue measure on $[0,1)$
we can speculate about a converse theorem:
\begin{equation}\label{converse}
Y\sim(c+Y)\bmod{1}\text{ for some }c\in\R
\implies Y\sim U[0,1),\end{equation}
which translates into
$$S_b(aX)\sim S_b(X)\text{ for some }a\in(0,\infty)\implies
X \text{ is Benford base }b.$$
Notice that if $c=m/n$ with $m,n\in\N$
then, for example,
any distribution on $[0,1)$ having a density with period $1/n$
remains the same after translation by $c$ modulo 1.
This means that the converse does not hold for rational $c$
and thus for $\log_ba\in\Q$.
The proof of the converse \cite[Th.~4.13(ii)]{basictheory}
in the irrational case
is very short but it involves elements of Fourier analysis
so it is not trivial at all.

In my quest to understand why Benford's Law shows up in natural
science data I design mathematical models which are meant to simulate
numerical aspects of natural phenomena and run these simulations
to obtain large collections of numbers which I study for conformance
to Benford's Law. I always multiply each such collection by a constant
to see how it affects the distribution of the significand.
Not surprisingly, datasets which conform to Benford's Law
invariably preserve this property after multiplication by a constant.
However, I once thought of the crazy idea of multiplying
such a collection by a random set of numbers, so that each element
is multiplied by a different number coming from some arbitrary distribution.
One could expect that anything at all can come out of this
but to my surprise Benford's Law was always preserved.
In this way I discovered a generalization of scale-invariance for Benford's Law
for a fixed base $b\in(1,\infty)$:
\begin{equation}\label{XYX}
X\text{ is Benford and }Y\text{ is independent of }X\implies
YX\text{ is Benford}.\end{equation}
I immediately thought of a probability-theory explanation
of the empirical facts of my simulations along these lines:
if $Y$ takes only two values $a$ and $b$ then the original Benford dataset
is split into two subsets $A$ and $B$ so that
$A$ contains those numbers which are multiplied by $a$
and $B$ those which are multiplied by $b$.
Since $Y$ is independent of $X$, these two subsets remain Benford.
Each of these subsets remains Benford after multiplication
and so their mixture is still Benford.
This can be extended first to any random variable
with finitely many values
and then to any random variable
by limiting processes typical in measure theory.
Although I wrote a formal proof along these lines,
it was still a mystery how it could explain the empirical facts
of the simulations. After all, the argument works well only
when the original dataset is split into subsets which are large enough
to be reasonably thought of as Benford sets. The fact of the matter
is that very probably these subsets were singletons in the actual simulations
because I always chose random numbers from continuous distributions.
My friend who is an excellent mathematician managed to explain
the phenomenon in general philosophical terms so that
the mystery was adequately solved but we must stick
with the formalism of probability theory in this article.
Perhaps something of the spirit of the philosophical argument
is captured in the formalism of the proof of the following theorem
--- which is a restatement of (\ref{XYX}) into the perspective
of $[0,1)$ modulo 1.

\begin{theorem}\label{filozof}
Let $X\sim U[0,1)$. If $Y$ is independent of $X$
then $(Y+X)\bmod{1}\sim U[0,1)$.
\end{theorem}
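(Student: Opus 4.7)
The plan is to reduce the statement to the classical case of a deterministic translation by conditioning on $Y$. First I fix $t \in [0,1)$ and use independence together with Fubini to write
\[
P((Y+X) \bmod 1 \le t) = \int_{\R} P((y+X) \bmod 1 \le t) \, dP_Y(y).
\]
The inner probability depends on $y$ only through its fractional part, so I may replace $y$ by $r = y \bmod 1 \in [0,1)$. Splitting the event $\{(r+X) \bmod 1 \le t\}$ into the two cases $r+X < 1$ and $r+X \ge 1$, and using that $X \sim U[0,1)$, a direct computation gives $P((r+X) \bmod 1 \le t) = t$ regardless of $r$. Substituting this constant value back into the integral yields $P((Y+X) \bmod 1 \le t) = t$, which is the distribution function of $U[0,1)$.

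Equivalently, and perhaps more elegantly, I would appeal to the Weyl equidistribution criterion: a random variable $Z$ valued in $[0,1)$ is $U[0,1)$ distributed if and only if $E[e^{2\pi i n Z}] = 0$ for every $n \in \Z \setminus \{0\}$. Since reducing modulo $1$ does not change the value of $e^{2\pi i n\,\cdot}$, it suffices to verify that $E[e^{2\pi i n (X+Y)}] = 0$ for every nonzero integer $n$. By independence this factors as $E[e^{2\pi i n X}] \cdot E[e^{2\pi i n Y}]$, and the first factor is $\int_0^1 e^{2\pi i n x} dx = 0$, which kills the product regardless of the distribution of $Y$.

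Either route makes the theorem essentially routine; the only mild obstacle is making sure that the independence of $X$ and $Y$ is used correctly, first to justify the Fubini factorisation (or the factorisation of the characteristic function) and second to identify the conditional distribution of $(Y+X) \bmod 1$ given $Y=y$ with the distribution of $(y+X) \bmod 1$ treated as a deterministic shift. I would favour the Fubini presentation in the paper, since the characteristic-function argument will reappear in the base-invariance section and Theorem \ref{filozof} is better served by a proof that highlights the translation invariance of Lebesgue measure on $[0,1)$ already emphasised in the surrounding discussion.
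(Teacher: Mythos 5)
Your primary (Fubini/conditioning) argument is correct and is essentially the paper's own proof, which computes $\mu_{Y+X}(A)=\int_\R \mu_X(A-y)\,d\mu_Y(y)=\mu_X(A)$ for Borel $A\subset[0,1)$ using exactly the translation invariance of Lebesgue measure modulo $1$ that you highlight. Your alternative characteristic-function route is also valid, but it is the approach the paper explicitly attributes to the existing literature (Berger--Hill, Schatte, Boyle) rather than the one it adopts for this theorem.
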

\begin{proof}
This proof is nothing more than an adaptation of the standard
convolution of two distributions so we only write the essentials
without any comments.
$$\mu_{Y+X}(A)=\int_{\R^2}\mathds{1}_Ed\mu_{(X,Y)}
=\int_\R\Big(\int_\R\mathds{1}_{A-y}(x)d\mu_X(x)\Big)d\mu_Y(y)$$
$$=\int_\R\Big(\mu_X(A-y)\Big)d\mu_Y(y)
=\int_\R\Big(\mu_X(A)\Big)d\mu_Y
=\mu_X(A)\int_\R d\mu_Y=\mu_X(A)$$

$$\mu_X(A)=P(X\in A),\ \mu_Y(A)=P(Y\in A)$$
$$\mu_{Y+X}(A)=P((Y+X)\bmod{1}\in A)$$
$$\mu_{(X,Y)}(E)=P((X,Y)\in E)$$
$$E=\{(x,y)\colon(x+y)\bmod{1}\in A\}$$
$$A-y=\{x\colon(x+y)\bmod{1}\in A\}$$
\end{proof}

After digging in the literature I found this theorem
in a recent 2011 survey of Benford's Law theory
\cite[Th.~4.13(i)]{basictheory}
by Arno Berger and Theodore P. Hill,
where it is proven using the fact that Fourier coefficients
uniquely determine any distribution on $[0,1)$
and similarly in \cite[Satz 4.6]{verteilung} and \cite[page 883]{appliedfourier}.
A proof without using Fourier coefficients
for random variables having densities
is given in \cite[IV]{hamming}.
An analogous proof of Theorem \ref{filozof} is given
in Feller's textbook \cite[p.~64 (8.7)]{feller}
using the convolution of two densities,
which imposes an unnecessary requirement on $Y$
that is has a density.

Note that the converse to this theorem runs into the same
difficulties as mentioned earlier (\ref{converse}).
Let us ask for which random variables $Y$ we have
\begin{equation}\label{questio}
X\sim(Y+X)\bmod{1}\text{ and }X,Y\text{ are independent }
\implies X\sim U[0,1).\end{equation}
The answer is that it works as long as
$P(Y\in\Q)<1$ or
$Y$ modulo 1 has infinitely many rational atoms in $[0,1)$.
The excluded case is when $Y$ is concentrated
on an arithmetic sequence of rational numbers.
For bounded variables it is the same as
assuming finitely many rational values
with probability one,
that is $P(Y\in A)=1$ for some finite $A\subset\Q$.

All the ingredients of the proof can be found in the survey just-mentioned
but it is not given there explicitly --- see the details of the proofs
of the three parts of \cite[Th.~4.13]{basictheory}.
An explicit statement with a proof can be found
in a 1973 paper written in German by Peter Schatte,
\cite[Satz 4.7]{verteilung}. He also cites it again in 1983
as a comment to \cite[Th.~2.2]{sumsmodulo}.
Here it is given as Theorem \ref{scalesummary}.

Beside this compilatory effort,
my contribution is the answer to the question
what can be inferred about $X$ when
$Y$ is independent of $X$
and $Y+X$ is distributed the same as $X$ modulo 1
in the general case when $Y$ is permitted to be
concentrated on a finite set of rationals.
It turns out that $X$ can be reasonably close
to being uniformly distributed modulo 1
as long as $Y$ is a reasonable approximation
of a continuous random variable.

My key result in this section (Theorem \ref{mybest}) is that if
$$X\sim(Y+X)\bmod{1}$$
and $$Y\text{ is independent of }X$$ with
$$N=\min\Big\{n\in\N\colon P\big(nY\in\Z\big)=1\Big\}<\infty$$
then $$X\sim\Big(\frac{1}{N}+X\Big)\bmod{1},$$
and thus $$X\text{ is equally distributed on each interval }
\Big[\frac{k}{N},\frac{k+1}{N}\Big), k=0,1,\ldots,N-1.$$

I use the Fourier series technique inspired by the German article by Peter Schatte
\cite{verteilung},
which is written in the Riemann–Stieltjes integration paradigm
with cumulative distribution functions being continuous from the left.
Because of these technicalities (and the fact of being written in German
with different notational conventions) I decided to rewrite the arguments
and put them into the Lebesgue integration paradigm
with continuous from the right distribution functions,
which yielded more elegant formulations of the key formulas
involving expansion into Fourier series.

The result is a self-contained exposition of the kind of Fourier analysis
mentioned earlier plus additional techniques of studying cumulative distribution functions through their Fourier coefficients.
The only external reference is the following well-known Jordan's criterion
for the pointwise convergence of a Fourier series,
stated in the simplest possible version for our purposes.

\begin{theorem}[Jordan's Test]\label{fourierpointwise}
If $F\colon[0,1)\to[0,1]$ is monotonic then
$$\frac{F(x^+)+F(x^-)}{2}=\lim_{N\to\infty}\sum_{n=-N}^{n=N}
\hat{F}(n)e^{2\pi inx}$$
for all $x\in(0,1)$, where
$$\hat{F}(n)=\int_0^1F(t)e^{-2\pi int}dt$$
for each $n\in\Z$ and $F(x^+)$, $F(x^-)$ denote the one-sided limits of $F$ at $x$.
\end{theorem}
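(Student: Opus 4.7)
The plan is to prove Jordan's Test via the classical Dirichlet kernel technique combined with Bonnet's form of the second mean value theorem for integrals, exploiting the fact that monotonic functions are of bounded variation.

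First I would extend $F$ to a $1$-periodic function on $\R$ and express the symmetric partial sum as a convolution against the Dirichlet kernel:
\[
S_N(x) := \sum_{n=-N}^{N} \hat{F}(n)\, e^{2\pi i n x} = \int_0^1 F(t)\, D_N(x-t)\, dt, \qquad D_N(u) = \frac{\sin((2N+1)\pi u)}{\sin(\pi u)}.
\]
Using $1$-periodicity, the symmetry $D_N(-u) = D_N(u)$, and the normalization $\int_0^{1/2} D_N(u)\, du = \tfrac{1}{2}$, I would rewrite the target difference as
\[
S_N(x) - \frac{F(x^+)+F(x^-)}{2} \;=\; \int_0^{1/2} \bigl[g^+(u) + g^-(u)\bigr]\, D_N(u)\, du,
\]
where $g^+(u) = F(x+u) - F(x^+)$ and $g^-(u) = F(x-u) - F(x^-)$ are monotonic on $(0,1/2)$ with $g^{\pm}(u) \to 0$ as $u \to 0^+$.

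Next I would split each integral as $\int_0^{\delta} + \int_{\delta}^{1/2}$ for a small $\delta > 0$. On $[\delta, 1/2]$ the factor $1/\sin(\pi u)$ is bounded, so the integrand reduces to a fixed $L^1$ function times $\sin((2N+1)\pi u)$; by the Riemann--Lebesgue lemma this outer piece vanishes as $N\to\infty$. For the inner piece, I would invoke Bonnet's second mean value theorem applied to the monotonic $g^{\pm}$ against the integrable factor $D_N$: there is some $\xi \in (0,\delta)$ with
\[
\int_0^{\delta} g^+(u)\, D_N(u)\, du \;=\; g^+(\delta^-)\, \int_{\xi}^{\delta} D_N(u)\, du,
\]
and analogously for $g^-$. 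Provided the truncated Dirichlet integrals on the right are bounded uniformly in $N, \xi, \delta$, choosing $\delta$ small enough to make $|g^{\pm}(\delta^-)|$ tiny renders the inner piece arbitrarily small, and then $N\to\infty$ kills the outer piece.

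The hard step, carrying the real content of Jordan's Test, is establishing the uniform bound
\[
\sup_{\,N \ge 1,\; 0 \le \alpha < \beta \le 1/2\,}\; \left|\, \int_{\alpha}^{\beta} \frac{\sin((2N+1)\pi u)}{\sin(\pi u)}\, du \,\right| < \infty.
\]
This reduces, via the fact that $\frac{1}{\sin(\pi u)} - \frac{1}{\pi u}$ is bounded (actually continuous) on $[0, 1/2]$, to the uniform boundedness of the sine integral $\int_0^T \frac{\sin s}{s}\, ds$ over $T \in [0,\infty)$, a standard alternating-tails estimate (or one integration by parts). Everything else is routine bookkeeping; the key conceptual point is that monotonicity lets Bonnet's theorem trade the ill-behaved factor $g^{\pm}$ near $0$ for a single boundary value $g^{\pm}(\delta^-)$ times a tame Dirichlet integral.
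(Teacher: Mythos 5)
Your proof is correct, but it is a genuinely different route from the paper's: the paper does not prove Jordan's Test at all, it simply rescales $F$ to a function on $[0,2\pi)$ and cites Edwards [10.1.1], explicitly treating this as the one external black box of the whole section. What you have written out is, in effect, the content of that cited result --- the classical Dirichlet-kernel argument: the convolution identity $S_N(x)=\int_0^1F(t)D_N(x-t)\,dt$, the symmetrization against $\tfrac12(F(x^+)+F(x^-))$ via $\int_0^{1/2}D_N=\tfrac12$, Riemann--Lebesgue away from $0$, and Bonnet's second mean value theorem near $0$ combined with the uniform boundedness of the truncated integrals $\int_\alpha^\beta D_N$, which you correctly reduce to the boundedness of $\int_0^T\frac{\sin s}{s}\,ds$. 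This buys self-containedness at the price of reproducing a textbook argument; the paper's choice buys brevity and keeps the exposition focused on the probabilistic content. One small point of hygiene: after periodic extension, $g^+(u)=F(x+u)-F(x^+)$ is monotone only on $(0,1-x)$ (for $x>\tfrac12$ the wrap-around destroys monotonicity on all of $(0,\tfrac12)$), so you should apply Bonnet on $(0,\delta)$ with $\delta<\min(x,1-x)$ and state the monotonicity only there; since that is exactly where you invoke it, the argument is unaffected. Also note that your hypothesis uses only that $g^{\pm}$ is monotone near $x$ with limit $0$, i.e.\ bounded variation near $x$, which is the natural generality of Jordan's test and more than suffices for the monotone $F$ in the statement.
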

\begin{proof}
Apply \cite[10.1.1]{fourier} to $f\colon[0,2\pi)\to\R$ given by $f(x)=F(x/2\pi)$.
\end{proof}

\subsection*{Fourier coefficients}
If $f\colon[0,1)\to\R$ is integrable, let
\begin{equation}\label{fourdef}
\hat{f}(n)=\int_0^1f(t)e^{-2\pi int}dt\text{ \ \ for each }n\in\Z,
\end{equation}
cf.~\cite[(4)]{appliedfourier}.
The numbers $\hat{f}(n)$ are called the Fourier coefficients of
the function $f$.
Note that if $f$ is the density of a random variable
$X\colon\Omega\to[0,1)$ and 
$\varphi(t)=Ee^{itX}$ is the characteristic function of $X$,
then $\hat{f}(n)=\varphi(-2\pi n)$ for each $n\in\Z$
and $\varphi(0)=1$.
The numbers $\varphi(-2\pi n)$ are called the Fourier coefficients
of the distribution of the random variable $X$
even when it has no density.

Let $X,Y\colon\Omega\to\R$ be random variables
on a probability space $(\Omega,\mathfrak M,P)$.
Then $$\varphi_X(t)=Ee^{itX}=
\int_\Omega e^{itX(\omega)}dP(\omega)
=\int_\R e^{itx}d\mu_X(x),$$
where $\mu_X(A)=P(X\in A)$,
and analogously $\varphi_Y$ are their characteristic functions.
If $X$ and $Y$ are independent, then
--- as a consequence of Fubini's theorem,
\begin{equation}\label{phixy}
\varphi_{X+Y}(t)=Ee^{it(X+Y)}=Ee^{itX}\cdot
Ee^{itY}=\varphi_X(t)\varphi_Y(t).\end{equation}
The following three facts will be repeatedly used in
both this section and the next.
\begin{equation}\label{modulosmarts}
\varphi_X(2\pi n)=Ee^{2\pi inX}=
Ee^{2\pi in(X\bmod{1})}=\varphi_{X\bmod{1}}(2\pi n)
\text{ \ \ for all }n\in\Z.\end{equation}
As a consequence we have for independent random variables
$X,Y\colon\Omega\to\R$
\begin{equation}\label{independentsmarts}
\varphi_{(X+Y)\bmod{1}}(2\pi n)=\varphi_X(2\pi n)\varphi_Y(2\pi n)
\text{ \ \ for all }n\in\Z.\end{equation}
The uniform distribution $U[0,1)$ has very simple Fourier coefficients:
\begin{equation}\label{u01}
\varphi_{U[0,1)}(2\pi n)=\int_0^1e^{2\pi in}dx=0
\text{ \ \ for all }n\in\Z\setminus\{0\}.\end{equation}

The application of Fourier series to the study of Benford's Law
is excellently discussed in \cite{appliedfourier}
under the supposition that the random variables under consideration
have densities in $L^2$. Unfortunately, this is not enough for us
because we want to characterize Benford's Law
as the only distribution satisfying (\ref{XYX})
in the family of all distributions.
Therefore we look at the Fourier coefficients
of cumulative distribution functions rather than densities.

The following lemma will be used to express the Fourier coefficients
of a cumulative distribution function in terms of
the Fourier coefficients of the distribution itself.
Integration by parts of Stieltjes integrals (as practiced by Peter Schatte
in this context)
 is thus avoided
and substituted with the application of Fubini's Theorem
in the usual manner of measure theory based on Lebesgue integrals.

\begin{lemma}\label{nosteel}
Let $\mu$ be a probability measure on the Borel subsets of $[0,1)$.
If $h\colon[0,1]\to\R$ is continuously differentiable, then
$$\int_0^1 h'(t)\mu([0,t])dt
=h(1)-\int_{[0,1)}h(t)d\mu(t).$$
\end{lemma}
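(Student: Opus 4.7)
The plan is to recognize this as integration by parts for Stieltjes integrals, but carried out purely within the Lebesgue paradigm via Fubini's theorem, exactly as the paragraph preceding the lemma suggests. The key observation is that $\mu([0,t])$ is itself an integral, namely $\mu([0,t]) = \int_{[0,1)} \mathds{1}_{\{s\le t\}}\,d\mu(s)$, which immediately converts the single Lebesgue integral on the left-hand side into a double integral over $[0,1]\times[0,1)$.

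First I would substitute this indicator representation to obtain
$$\int_0^1 h'(t)\mu([0,t])\,dt = \int_0^1 \int_{[0,1)} h'(t)\mathds{1}_{\{s\le t\}}\,d\mu(s)\,dt.$$
Then I would swap the order of integration. This is permitted because $h'$ is continuous on the compact interval $[0,1]$, hence bounded, so the integrand is bounded on $[0,1]\times[0,1)$ and the product of Lebesgue measure with the probability measure $\mu$ is finite; Fubini-Tonelli applies without further fuss.

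After swapping, the inner integral becomes $\int_s^1 h'(t)\,dt = h(1) - h(s)$ by the fundamental theorem of calculus (valid since $h$ is continuously differentiable). Integrating against $d\mu(s)$ and using $\mu([0,1)) = 1$ yields
$$\int_{[0,1)} \bigl(h(1) - h(s)\bigr)\,d\mu(s) = h(1) - \int_{[0,1)} h(t)\,d\mu(t),$$
which is the desired identity.

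There is no real obstacle here: the only point to watch is the justification of Fubini, which is trivial given continuous differentiability on a closed interval and finiteness of $\mu$. The lemma is essentially a repackaging of Stieltjes integration by parts, and the whole point of formulating it this way is to keep the subsequent Fourier-coefficient computations entirely inside standard Lebesgue-measure theory.
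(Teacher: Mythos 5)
Your proposal is correct and follows exactly the same route as the paper's proof: rewrite $\mu([0,t])$ as $\int_{[0,1)}\mathds{1}_{\{s\le t\}}\,d\mu(s)$, apply Fubini, evaluate the inner integral $\int_s^1 h'(t)\,dt=h(1)-h(s)$, and use $\mu([0,1))=1$. You even supply the (routine) justification for Fubini that the paper leaves implicit.
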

\begin{proof}
$$\int_0^1 h'(t)\mu([0,t])dt=\int_0^1\Big(\int_{[0,1)}h'(t)\mathds{1}_{[0,t]}(s)d\mu(s)\Big)dt$$
$$=\int_{[0,1)}\Big(\int_0^1h'(t)\mathds{1}_{[s,1]}(t)dt\Big)d\mu(s)
=\int_{[0,1)}\Big(\int_s^1h'(t)dt\Big)d\mu(s)$$
$$=\int_{[0,1)}\Big(h(1)-h(s)\Big)d\mu(s)
=h(1)\mu([0,1))-\int_{[0,1)}h(s)d\mu(s).$$
\end{proof}

The same reasoning can be conducted
if we write $\mu([0,t))$ instead of $\mu([0,t])$
so it can also be used with the continuous from the left
version of the cumulation distribution function $P(X<t)$.

\begin{theorem}[cf.~\cite{verteilung} (3.2-5)]
\label{dystrybuanta}
If $X\colon\Omega\to[0,1)$ is a random variable,
$F(x)=P(X\le x)$ its cumulative distribution function,
and $\varphi(t)=Ee^{itX}$ its characteristic function,
then
$$\frac{F(x)+F(x^-)}{2}=1-EX+\lim_{N\to\infty}\sum_{0<|n|\le N}
\Big(\frac{\varphi(-2\pi n)-1}{2\pi in}\Big)e^{2\pi inx}$$
for all $x\in(0,1)$, where $F(x^-)$ denotes the left-sided limit of $F$ at $x$.
\end{theorem}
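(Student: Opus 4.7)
The plan is to apply Jordan's Test (Theorem \ref{fourierpointwise}) directly to the cumulative distribution function $F$, which is monotonic by construction and right-continuous since it is a CDF (so $F(x^+)=F(x)$ for $x\in(0,1)$). Jordan's Test then gives
$$\frac{F(x)+F(x^-)}{2}=\lim_{N\to\infty}\sum_{n=-N}^{N}\hat{F}(n)e^{2\pi inx},$$
so the entire task reduces to computing the Fourier coefficients $\hat{F}(n)=\int_0^1 F(t)e^{-2\pi int}dt$ and showing $\hat{F}(0)=1-EX$ and $\hat{F}(n)=(\varphi(-2\pi n)-1)/(2\pi in)$ for $n\neq 0$.

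For the coefficients with $n\neq 0$, the plan is to invoke Lemma \ref{nosteel} with the antiderivative $h(t)=e^{-2\pi int}/(-2\pi in)$, noting that $F(t)=\mu([0,t])$ for the distribution $\mu$ of $X$ on $[0,1)$ (and applying the lemma to the real and imaginary parts of $h$ separately, since it is stated for real-valued $h$). The lemma then yields
$$\hat{F}(n)=h(1)-\int_{[0,1)}h(t)\,d\mu(t)=\frac{1}{-2\pi in}-\frac{\varphi(-2\pi n)}{-2\pi in}=\frac{\varphi(-2\pi n)-1}{2\pi in},$$
where in the middle step we used $h(1)=1/(-2\pi in)$ and recognized $\int_{[0,1)}e^{-2\pi int}d\mu(t)=\varphi(-2\pi n)$.

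For $n=0$, I would compute $\hat{F}(0)=\int_0^1 P(X\le t)dt$ directly via Fubini, rewriting it as $\int_{[0,1)}\int_0^1\mathds{1}_{[x,1]}(t)\,dt\,d\mu(x)=\int_{[0,1)}(1-x)\,d\mu(x)=1-EX$. Splitting the Jordan sum as $\hat{F}(0)+\sum_{0<|n|\le N}\hat{F}(n)e^{2\pi inx}$ then assembles the stated formula.

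The argument is essentially bookkeeping; there is no serious obstacle beyond keeping the signs and complex conjugates straight and verifying that Lemma \ref{nosteel} transfers from real $h$ to the complex exponential $h(t)=e^{-2\pi int}/(-2\pi in)$, which is automatic by linearity. The only conceptual point worth highlighting is that this approach replaces Schatte's Stieltjes integration-by-parts (used in \cite{verteilung}) by the cleaner Fubini-based Lemma \ref{nosteel}, and that the right-continuity convention for $F$ (rather than left-continuity as in \cite{verteilung}) is what produces the clean expression of $\hat{F}(n)$ in terms of $\varphi(-2\pi n)$ without a boundary correction.
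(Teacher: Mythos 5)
Your proposal is correct and follows essentially the same route as the paper's proof: Jordan's Test applied to the right-continuous $F$, then Lemma \ref{nosteel} with the antiderivative $h(t)=e^{-2\pi int}/(-2\pi in)$ (extended to complex $h$ by linearity) to identify $\hat{F}(n)$ for $n\neq 0$. The only cosmetic difference is that the paper also obtains $\hat{F}(0)=1-EX$ by applying Lemma \ref{nosteel} to $h(t)=t$, whereas you compute it by a direct Fubini argument, which is the same calculation unwound.
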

\begin{proof}
First apply Theorem \ref{fourierpointwise}.
Since $F$ is continuous from the right, $F(x^+)=F(x)$.
Let $\mu(A)=P(X\in A)$ for all $A\in\borel([0,1))$.
Adopt Lemma \ref{nosteel} to the complex-valued function
$h(t)=e^{-2\pi int}=\cos(-2\pi nt)+i\sin(-2\pi nt)$
to conclude that
$$\int_0^1F(t)e^{-2\pi int}dt
=\int_0^1\Big(\frac{e^{-2\pi int}}{-2\pi in}\Big)'\mu([0,t])dt
=\frac{1}{-2\pi in}-\int_{[0,1)}\frac{e^{-2\pi int}}{-2\pi in}d\mu(t),$$
which means that $\hat{F}(n)=\frac{\varphi(-2\pi n)-1}{2\pi in}$
for each $n\in\Z\setminus\{0\}$.

Apply Lemma \ref{nosteel} to $h(t)=t$ to conclude that $\hat{F}(0)=1-EX$.
\end{proof}

\begin{theorem}
[cf. XIX.6 in \cite{feller} and Th.~2.2 in \cite{sumsmodulo}]
\label{coefficients}
Let $X,Y\colon\Omega\to[0,1)$ be random variables such that
$\varphi_X(2\pi n)=\varphi_Y(2\pi n)$ for all $n\in\N$.
Then $X\sim Y$.
\end{theorem}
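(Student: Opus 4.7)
The plan is to recover each cumulative distribution function from its Fourier coefficients via Theorem \ref{dystrybuanta} and to show that the two CDFs must coincide. First, since $X$ and $Y$ are real-valued we have $\varphi_X(-2\pi n)=\overline{\varphi_X(2\pi n)}$, so the hypothesis at positive integers $n\in\N$ automatically extends to all $n\in\Z\setminus\{0\}$ (for $n=0$ both characteristic functions equal $1$).

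Next I would apply Theorem \ref{dystrybuanta} to $X$ and to $Y$ separately and subtract the two identities. Since the complex Fourier coefficients $(\varphi_X(-2\pi n)-1)/(2\pi in)$ and $(\varphi_Y(-2\pi n)-1)/(2\pi in)$ coincide for every $n\in\Z\setminus\{0\}$, the partial sums cancel term by term and the limit vanishes, leaving
$$\frac{F_X(x)+F_X(x^-)}{2}-\frac{F_Y(x)+F_Y(x^-)}{2}=EY-EX$$
for every $x\in(0,1)$. Set $c:=EY-EX$; the problem reduces to showing that $c=0$ and that equality of the symmetrized CDFs implies equality of the CDFs themselves.

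For the concluding step I would exploit the fact that each CDF has at most countably many discontinuities, so the set $D\subset(0,1)$ of common continuity points of $F_X$ and $F_Y$ is dense. At every $x\in D$ the symmetrized value coincides with $F_X(x)$ (and with $F_Y(x)$), which yields $F_X(x)-F_Y(x)=c$ on $D$. Right-continuity of distribution functions combined with the density of $D$ then propagates this identity to all $x\in[0,1)$. Because $X,Y$ take values in $[0,1)$, we have $F_X(x),F_Y(x)\to 1$ as $x\to 1^-$, which forces $c=0$. Extending trivially to $x<0$ and $x\ge 1$, we conclude $F_X\equiv F_Y$, i.e.\ $X\sim Y$.

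The main obstacle I anticipate is precisely this last gap: Theorem \ref{dystrybuanta} does not deliver the CDF directly but its symmetrization $\tfrac12(F(x)+F(x^-))$, and a stray additive constant $EY-EX$ appears from the missing zero mode $\hat F(0)=1-EX$. Bridging the gap via common continuity points and right-continuity, and then pinning down the constant using the boundary behavior at $x\to 1^-$, is the crux of the argument; everything else is a purely formal subtraction of two applications of an already-established representation theorem.
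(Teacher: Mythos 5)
Your proposal is correct and follows essentially the same route as the paper's own proof: extend the hypothesis to all of $\Z$ by conjugate symmetry, apply Theorem \ref{dystrybuanta} to both variables, equate the symmetrized CDFs up to the constant $EY-EX$, pass through the dense set of common continuity points and right-continuity, and kill the constant by the boundary behavior at $1$ (the paper phrases this via left-continuity of $F_X,F_Y$ at $1$ since $P(X=1)=P(Y=1)=0$, which is the same observation as your limit $x\to 1^-$). No gaps; the step you flag as the crux is handled exactly as the paper handles it.
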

\begin{proof}
Notice that for any characteristic function $\varphi(-t)=\overline{\varphi(t)}$.
This means that our assumption implies that
$\varphi_X(2\pi n)=\varphi_Y(2\pi n)$ for all $n\in\Z$.
Let $F_X$, $F_Y$ be the cumulative distribution functions of $X$ and $Y$.
Since they are nondecreasing, there is a countable set $E\subset(0,1)$
such that $F_X$ and $F_Y$ are both continuous on $(0,1)\setminus E$,
which is dense in $(0,1)$.
From Theorem \ref{dystrybuanta} it follows that
$F_X(t)+EX=F_Y(t)+EY$ for all $t\in(0,1)\setminus E$
because they are expressed in terms of
$\varphi_X(2\pi n)=\varphi_Y(2\pi n)$.
Since these functions are continuous from the right,
$F_X(t)+EX=F_Y(t)+EY$ for all $t\in[0,1)$.
Since $P(X=1)=P(Y=1)=0$,
both $F_X$ and $F_Y$ are continuous from the left at $1$,
so $$1+EX=F_X(1)+EX=F_Y(1)+EY=1+EY.$$
It follows that $F_X=F_Y$ and thus $X\sim Y$.
\end{proof}

\begin{theorem}[Peter Schatte, Satz 4.7 in \cite{verteilung}]
\label{PeterSchatte}
Let $X,Y\colon\Omega\to[0,1)$ be independent random variables.
Suppose that $X\sim (X+Y)\bmod{1}$ and $\varphi_Y(2\pi n)\not=1$
for all $n\in\N\setminus\{0\}$.
Then $X\sim U[0,1)$.
\end{theorem}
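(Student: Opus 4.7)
The plan is to reduce the hypothesis to a statement about Fourier coefficients and then invoke Theorem~\ref{coefficients}. The heavy lifting (Jordan's test, Theorem~\ref{dystrybuanta}, and Theorem~\ref{coefficients}) has already been done, so the present theorem should follow in a few lines.

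First I would apply (\ref{independentsmarts}), which gives, for every $n\in\Z$,
$$\varphi_{(X+Y)\bmod{1}}(2\pi n)=\varphi_X(2\pi n)\,\varphi_Y(2\pi n).$$
Using the assumption $X\sim(X+Y)\bmod{1}$, the left-hand side equals $\varphi_X(2\pi n)$, so that
$$\varphi_X(2\pi n)\bigl(1-\varphi_Y(2\pi n)\bigr)=0\quad\text{for all }n\in\Z.$$

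Next I would extend the nonvanishing hypothesis from $\N\setminus\{0\}$ to $\Z\setminus\{0\}$ using the identity $\varphi_Y(-2\pi n)=\overline{\varphi_Y(2\pi n)}$: if $\varphi_Y(2\pi n)\neq 1$, then also its conjugate is different from $1$. Therefore $\varphi_X(2\pi n)=0$ for every $n\in\Z\setminus\{0\}$. Comparing this with (\ref{u01}), which says $\varphi_{U[0,1)}(2\pi n)=0$ for the same $n$, I obtain
$$\varphi_X(2\pi n)=\varphi_{U[0,1)}(2\pi n)\quad\text{for every }n\in\N.$$

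Finally, since both $X$ and a $U[0,1)$-distributed random variable take values in $[0,1)$, Theorem~\ref{coefficients} is directly applicable and yields $X\sim U[0,1)$.

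The only thing that even resembles an obstacle is the sign extension from $n\in\N$ to $n\in\Z$, and that is immediate from the conjugation property of characteristic functions. All other steps are routine manipulations of the equation $\varphi_{X+Y}=\varphi_X\varphi_Y$ for independent summands, combined with the reduction modulo $1$ encoded in (\ref{modulosmarts}).
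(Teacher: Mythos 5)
Your proposal is correct and follows essentially the same route as the paper: use (\ref{independentsmarts}) together with $X\sim(X+Y)\bmod{1}$ to get $\varphi_X(2\pi n)\bigl(1-\varphi_Y(2\pi n)\bigr)=0$, conclude $\varphi_X(2\pi n)=0$ for $n\neq 0$, and finish with Theorem~\ref{coefficients}. The only cosmetic difference is that you extend the vanishing to all of $\Z\setminus\{0\}$ by conjugation, whereas the paper works only with $n\in\N$, which is all Theorem~\ref{coefficients} requires anyway.
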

\begin{proof}
By assumptions, see (\ref{independentsmarts}),
$$\varphi_X(2\pi n)=
\varphi_{(X+Y)\bmod{1}}(2\pi n)=
\varphi_X(2\pi n)\varphi_Y(2\pi n)\text{ \ \ \ for all }n\in\N.$$
It follows that
$$\varphi_X(2\pi n)=0\text{ \ \ \ for all }n\in\N\setminus\{0\}.$$
By Theorem \ref{coefficients}, $X\sim U[0,1)$.
\end{proof}

Next we explain the condition
$\varphi(2\pi n)\not=1$ used in Theorem \ref{PeterSchatte}.
The following Proposition \ref{rachuneczek} deals with
the more general case $|\varphi(2\pi n)|=1$
and its Corollary \ref{char1} deals with $\varphi(2\pi n)=1$.
These well-known folklore results
(e.g.~\cite[p.~36]{basictheory}, \cite[p.~500]{feller})
are included for the sake of
completeness because they reveal the underlying technical reasons
why a refinement is needed in order to study the excluded case
discussed in (\ref{questio}).

\begin{proposition}\label{rachuneczek}
Let $X\colon\Omega\to\R$ be a random variable
and let $\mu(A)=P(X\in A)$ be its distribution. Let $t\in(0,\infty)$ and $\theta\in[0,1)$. Then
\begin{equation}\label{teta}\int_\R e^{2\pi itx}d\mu(x)=e^{2\pi i\theta}\end{equation}
if and only if
\begin{equation}\label{Nuniform}
P\Big(X\in\big\{\frac{n+\theta}{t}\colon n\in\Z\big\}\Big)=1.\end{equation}
\end{proposition}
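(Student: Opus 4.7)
The ``if'' direction is a one-line substitution: if $\mu$ is concentrated on $\{(n+\theta)/t\colon n\in\Z\}$, then $e^{2\pi itx}=e^{2\pi i(n+\theta)}=e^{2\pi i\theta}$ at each point of the support, so $\int_\R e^{2\pi itx}d\mu(x)=e^{2\pi i\theta}\cdot\mu(\R)=e^{2\pi i\theta}$.

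For the ``only if'' direction the plan is to exploit the fact that the integrand has modulus one and that the integral attains the maximal possible modulus. Multiplying both sides of (\ref{teta}) by $e^{-2\pi i\theta}$ and taking real parts I would rewrite the hypothesis as
$$\int_\R \cos\bigl(2\pi(tx-\theta)\bigr)\,d\mu(x)=1.$$
Since $\mu$ is a probability measure and $\cos(\cdot)\le 1$ pointwise, the nonnegative integrand $1-\cos(2\pi(tx-\theta))$ has zero integral against $\mu$, hence vanishes $\mu$-almost everywhere.

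From $\cos(2\pi(tx-\theta))=1$ for $\mu$-a.e.\ $x$ I would conclude that $2\pi(tx-\theta)\in 2\pi\Z$, i.e.\ $tx-\theta\in\Z$, for $\mu$-a.e.\ $x$. This is precisely the statement $P\bigl(X\in\{(n+\theta)/t\colon n\in\Z\}\bigr)=1$ in (\ref{Nuniform}).

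There is no real obstacle here; the only point requiring a moment of care is the passage from ``integral equals maximum value'' to ``integrand equals that maximum a.e.,'' which I would package cleanly by rotating the integrand through the factor $e^{-2\pi i\theta}$ and taking real parts, so that the argument reduces to the elementary fact that a nonnegative integrable function whose integral against a probability measure is zero must vanish almost everywhere.
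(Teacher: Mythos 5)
Your proposal is correct and follows essentially the same route as the paper: the ``if'' direction is the same direct substitution (the paper writes it as a sum over the atoms $p_n$, you phrase it as the integrand being constant $\mu$-a.e., which is equivalent), and the ``only if'' direction uses the identical device of multiplying by $e^{-2\pi i\theta}$, passing to the real part, and observing that the nonnegative function $1-\cos\bigl(2\pi(tx-\theta)\bigr)$ with zero integral against a probability measure must vanish $\mu$-almost everywhere. No gaps.
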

\begin{proof}
Suppose (\ref{teta}). Then
$$1=\int_\R e^{2\pi itx}e^{-2\pi i\theta}d\mu(x)=
\int_\R e^{2\pi i(tx-\theta)}d\mu(x).$$
Thus $$\int_\R\cos\big(2\pi(tx-\theta)\big)d\mu(x)=1$$
and $$\int_\R1-\cos\big(2\pi(tx-\theta)\big)d\mu(x)=0.$$
Since the integrated function is nonnegative,
$$1=\mu\big\{x\in\R\colon\cos\big(2\pi(tx-\theta)\big)=1\big\}$$
and consequently
$$P(tX-\theta\in\Z)=1,$$
which is equivalent to (\ref{Nuniform}).

Now, suppose (\ref{Nuniform}). Let us write $p_n=P\big(X=\frac{n+\theta}{t}\big)$.
Then $$\int_\R e^{2\pi itx}d\mu(x)=
\sum_{n\in\Z}p_ne^{2\pi it\frac{n+\theta}{t}}
=\sum_{n\in\Z}p_ne^{2\pi i(n+\theta)}
=e^{2\pi i\theta}\sum_{n\in\Z}p_n=e^{2\pi i\theta}.$$
\end{proof}

\begin{corollary}\label{char1}
Let $Y\colon\Omega\to[0,1)$ be a random variable and $\varphi$ its characteristic function. Let $N\in\N$.
Then $\varphi(2\pi N)=1$ if and only if \begin{equation}
P\Big(Y\in\big\{\frac{k}{N}\colon k\in\{0,1,\ldots,N-1\}\big\}\Big)=1.\end{equation}
\end{corollary}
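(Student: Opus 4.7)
This is an immediate specialization of Proposition \ref{rachuneczek}. My plan is to apply that proposition with the parameter choices $t=N\in\N$ and $\theta=0\in[0,1)$. With these choices the left-hand side of (\ref{teta}) becomes exactly $\varphi(2\pi N)$ and the right-hand side becomes $e^{2\pi i\cdot 0}=1$, while the support condition (\ref{Nuniform}) becomes $P(Y\in\{n/N\colon n\in\Z\})=1$.

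The only thing left is to trim this $\Z$-indexed set down to the one stated in the corollary. Since by hypothesis $Y$ takes values in $[0,1)$, the set $\{n/N\colon n\in\Z\}\cap[0,1)$ equals $\{k/N\colon k\in\{0,1,\ldots,N-1\}\}$, and so $P(Y\in\{n/N\colon n\in\Z\})=P(Y\in\{k/N\colon k\in\{0,1,\ldots,N-1\}\})$. This substitution yields both implications simultaneously, so no separate direction of argument is required.

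There is no real obstacle here; the content is entirely in Proposition \ref{rachuneczek}, and the corollary just records the case when the exponent equals $1$ for a variable supported in $[0,1)$. The only point worth flagging explicitly in the write-up is the use of $Y([0,1))\subset[0,1)$ to reduce the support condition from $\Z$-indexed rationals to the $N$-element set $\{0,1/N,\ldots,(N-1)/N\}$.
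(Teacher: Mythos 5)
Your proposal is correct and matches the paper's proof exactly: the paper also just applies Proposition \ref{rachuneczek} with $t=N$ and $\theta=0$. Your extra remark about intersecting $\{n/N\colon n\in\Z\}$ with $[0,1)$ is a detail the paper leaves implicit, but it is the right (and only) bookkeeping step needed.
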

\begin{proof}
Apply Proposition \ref{rachuneczek} to $t=N$ and $\theta=0$.
\end{proof}

The answer to (\ref{questio}) is summarized below.

\begin{theorem}\label{scalesummary}
Let $X,Y\colon\Omega\to\R$ be independent random variables.
Suppose that $P(Y\in\Q)<1$ or $Y\bmod{1}$ has infinitely
many atoms. Then $$YX\bmod{1}\sim X\bmod{1}\implies
X\bmod{1}\sim U[0,1).$$
\end{theorem}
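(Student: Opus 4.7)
The plan is to reduce the statement to Theorem \ref{PeterSchatte}, which under the spectral hypothesis $\varphi_Y(2\pi n) \neq 1$ for every $n \in \N \setminus \{0\}$ already delivers exactly the conclusion $X \bmod 1 \sim U[0,1)$. The only task is therefore to argue that either alternative in the disjunctive hypothesis on $Y$ forces this spectral condition.

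First I would pass to the modulo-1 reduction. Independence of $X$ and $Y$ is inherited by $X \bmod 1$ and $Y \bmod 1$, and by (\ref{modulosmarts}) we have $\varphi_Y(2\pi N) = \varphi_{Y \bmod 1}(2\pi N)$ for every $N \in \Z$, so the argument may be conducted with $[0,1)$-valued random variables. Corollary \ref{char1} applied to $Y \bmod 1$ then characterizes $\varphi_Y(2\pi N) = 1$ precisely as the property that $Y \bmod 1$ is supported on the $N$-point rational set $\{k/N : 0 \leq k < N\}$. Similarly, the distributional hypothesis on $X$ (equating the law of $X \bmod 1$ with that of the image under $Y$) transfers via (\ref{independentsmarts}) to the multiplicative identity $\varphi_{X \bmod 1}(2\pi n) = \varphi_{X \bmod 1}(2\pi n)\,\varphi_{Y \bmod 1}(2\pi n)$ for every $n \in \Z$, which is exactly the input demanded by Peter Schatte's argument.

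The heart of the proof is the following contrapositive observation. Assume for contradiction that $\varphi_Y(2\pi N) = 1$ for some positive integer $N$. Then by the previous paragraph $Y \bmod 1$ has at most $N$ atoms, contradicting the second alternative in the hypothesis; moreover every atom lies in $\Q$, so $P(Y \in \Q) = 1$, contradicting the first alternative. Since the disjunction in the hypothesis rules out both, $\varphi_Y(2\pi N) \neq 1$ holds for every $N \in \N \setminus \{0\}$, and Theorem \ref{PeterSchatte} closes the argument by forcing $\varphi_{X \bmod 1}(2\pi n) = 0$ for all $n \neq 0$ and hence $X \bmod 1 \sim U[0,1)$ via Theorem \ref{coefficients}.

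The main step where one must not slip is the bookkeeping of Step 1, namely recognizing that the disjunctive hypothesis is exactly the logical negation of the ``excluded case'' described right after (\ref{questio}): $Y$ concentrated on an arithmetic progression of rationals with common denominator $N$. Once this identification is made, the theorem is essentially a repackaging of Corollary \ref{char1} with Theorem \ref{PeterSchatte}, and there is no substantial further obstacle.
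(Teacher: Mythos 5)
Your proof is correct and follows essentially the same route as the paper's: Corollary \ref{char1} shows (via the contrapositive you spell out) that either alternative in the hypothesis forces $\varphi_Y(2\pi n)\neq 1$ for all $n\in\N$, and Theorem \ref{PeterSchatte} then finishes. The paper's own proof is just a two-line compression of exactly this argument, so no further comment is needed.
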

\begin{proof}
By Corollary \ref{char1},
$\varphi_Y(2\pi n)\not=1\text{ for all }n\in\N\setminus\{0\}$.
Apply Theorem \ref{PeterSchatte}.
\end{proof}

We already suspect that the condition $X\sim Y+X \mod 1$
has something to do with the periodicty of the distributions.
We cannot talk about the density being periodic because
we do not want to restrict our analysis to random variables
having densities. Instead, we will think in terms of
$F(t)-t$ being periodic on $[0,1)$, where $F$ is the cumulative
distribution function.
The next three technical results prepare the stage along these lines
for the culmination in Theorem \ref{mybest}.

\begin{theorem}[cf.~\cite{verteilung} Satz 3.1]\label{minusx}
If $X\colon\Omega\to[0,1)$ is a random variable,
$F(x)=P(X\le x)$ its cumulative distribution function,
and $\varphi(t)=Ee^{itX}$ its characteristic function,
then
$$\frac{F(x)+F(x^-)}{2}-x=\frac{1}{2}-EX+\lim_{N\to\infty}\sum_{0<|n|\le N}
\Big(\frac{\varphi(-2\pi n)}{2\pi in}\Big)e^{2\pi inx}$$
for all $x\in(0,1)$, where $F(x^-)$ denotes the left-sided limit of $F$ at $x$.
\end{theorem}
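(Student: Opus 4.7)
The plan is to deduce the identity from Theorem \ref{dystrybuanta} by subtracting the specialization of that same theorem to the uniform distribution. A direct attack by applying Jordan's Test to the function $F(x)-x$ is awkward because $F(x)-x$ need not be monotone, so a detour through $U[0,1)$ is the cleanest route.

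First I would write Theorem \ref{dystrybuanta} for the given $X$:
$$\frac{F(x)+F(x^-)}{2}=1-EX+\lim_{N\to\infty}\sum_{0<|n|\le N}\frac{\varphi(-2\pi n)-1}{2\pi in}e^{2\pi inx}.$$
Then I would apply that same theorem to a variable $Y\sim U[0,1)$, for which $F_Y$ is continuous and equals the identity, $EY=\tfrac12$, and $\varphi_Y(-2\pi n)=0$ for $n\ne0$ by (\ref{u01}). The instantiation reads
$$x=\frac{1}{2}-\lim_{N\to\infty}\sum_{0<|n|\le N}\frac{1}{2\pi in}e^{2\pi inx},\qquad x\in(0,1),$$
which I would rearrange into the classical Fourier evaluation
$$\lim_{N\to\infty}\sum_{0<|n|\le N}\frac{1}{2\pi in}e^{2\pi inx}=\frac{1}{2}-x.$$

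With this identity established, I would split the numerator $\varphi(-2\pi n)-1$ in the $X$-formula into the two pieces $\varphi(-2\pi n)$ and $-1$. The piece with $-1$ has a known limit by the uniform-distribution computation, so its companion with $\varphi(-2\pi n)$ also converges (as a difference of two convergent symmetric partial sums). Substituting the closed form $\tfrac12-x$ for the $-1$ piece and carrying $x$ across the equality yields exactly
$$\frac{F(x)+F(x^-)}{2}-x=\frac{1}{2}-EX+\lim_{N\to\infty}\sum_{0<|n|\le N}\frac{\varphi(-2\pi n)}{2\pi in}e^{2\pi inx},$$
which is the asserted formula. The only real obstacle is sign bookkeeping; all the genuine analytic content (pointwise convergence of the symmetric Fourier partial sums at interior points of $(0,1)$, and the expression of $\hat{F}(n)$ in terms of $\varphi(-2\pi n)$) has already been absorbed into Theorem \ref{dystrybuanta} and the Jordan's Test it rests on.
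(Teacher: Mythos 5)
Your proposal is correct and follows essentially the same route as the paper: both apply Theorem \ref{dystrybuanta} to the uniform distribution $U[0,1)$ to obtain the classical identity $x=\frac{1}{2}+\lim_{N\to\infty}\sum_{0<|n|\le N}\big(\frac{-1}{2\pi in}\big)e^{2\pi inx}$ on $(0,1)$, and then subtract it from the general formula, splitting the numerator $\varphi(-2\pi n)-1$ into its two pieces. Your sign bookkeeping checks out, and your observation that convergence of the $\varphi(-2\pi n)$ piece follows as a difference of convergent symmetric partial sums is exactly the point the paper compresses into its remark about the Fourier coefficients of $h(x)=\frac{F(x)+F(x^-)}{2}-x$.
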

\begin{proof}
Apply Theorem \ref{dystrybuanta} to conclude that
$$x=\frac{1}{2}+\lim_{N\to\infty}\sum_{0<|n|\le N}
\Big(\frac{-1}{2\pi in}\Big)e^{2\pi inx}\text{ \ \ \ for all  }x\in(0,1).$$
It is important to keep track of the fact that
$$\hat{h}(n)=\frac{\varphi(-2\pi n)}{2\pi in}
\text{ \ \ for all }n\in\Z\setminus\{0\},$$
where $h(x)=\frac{F(x)+F(x^-)}{2}-x$.
\end{proof}

Note that we are making minimal use of our only external reference
Theorem \ref{fourierpointwise}. In particular, we can do without
the fact that the coefficients of a trigonometric series
are uniquely determined.

\begin{lemma}\label{zerocoefficients}
Let $f\colon[0,1)\to[0,1]$ and $N\in\N$.
Suppose that
$$f(x)=\lim_{k\to\infty}\sum_{|n|\le k}\hat{f}(n)e^{2\pi inx}
\textnormal{ for all } x\in(0,1).$$
Then the following conditions are equivalent:
\begin{equation}f\Big(x+\frac{1}{N}\Big)=f(x)
\textnormal{ for all } x\in\Big[0,1-\frac{1}{N}\Big)\end{equation}
\begin{equation}\hat{f}(n)=0\textnormal{ whenever }n
\textnormal{ is not a multiple of }N.\end{equation}
\end{lemma}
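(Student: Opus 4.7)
\textbf{Plan of proof for Lemma \ref{zerocoefficients}.}

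The equivalence is a clean Fourier-analytic statement, and I would prove both directions by direct computation, using only the definition of $\hat{f}$ and the pointwise Fourier representation that is stated as a hypothesis.

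For the implication periodicity $\implies$ coefficient vanishing, the plan is to split the integral defining $\hat{f}(n)$ into $N$ pieces of length $1/N$ and shift each piece back to $[0,1/N)$:
$$\hat{f}(n)=\sum_{k=0}^{N-1}\int_{k/N}^{(k+1)/N}f(t)e^{-2\pi int}dt=\sum_{k=0}^{N-1}\int_0^{1/N}f(s+k/N)e^{-2\pi in(s+k/N)}ds.$$
Using the hypothesized periodicity $f(s+k/N)=f(s)$ on $[0,1/N)$, this factors as
$$\hat{f}(n)=\Big(\sum_{k=0}^{N-1}e^{-2\pi ink/N}\Big)\int_0^{1/N}f(s)e^{-2\pi ins}ds,$$
and the geometric sum is $N$ when $N\mid n$ and $0$ otherwise, giving the conclusion.

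For the converse, the idea is that if $\hat{f}(n)=0$ for every $n$ not divisible by $N$, then the only surviving frequencies in the representation
$$f(x)=\lim_{k\to\infty}\sum_{|n|\le k}\hat{f}(n)e^{2\pi inx}$$
are integer multiples of $N$, and each function $x\mapsto e^{2\pi iNmx}$ has period $1/N$. Reindexing $n=Nm$ and translating $x\mapsto x+1/N$ inside the partial sums therefore leaves every term unchanged, so $f(x+1/N)=f(x)$ at every point where both sides are given by the converging Fourier series, that is, on $(0,1-1/N)$.

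The only delicate point will be the boundary value $x=0$, since the pointwise representation hypothesis is assumed only on $(0,1)$ and does not directly control $f(0)$ and $f(1/N)$ simultaneously. The plan is to observe that for any $x\in(0,1-1/N)$ the translation argument gives $f(x+1/N)=f(x)$, and then to invoke this identity in a neighbourhood of $0$ together with the structure of $f$ as the value of a trigonometric series at interior points to extend the equality to $x=0$; in the intended application to cumulative distribution functions this issue evaporates because $f$ arises as a combination involving $F(x)+F(x^-)$ whose one-sided limits at $0$ and $1/N$ are governed by the same representation formula, so the endpoint identification is automatic. This boundary bookkeeping is the only step that is not a one-line calculation.
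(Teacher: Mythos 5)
Your proposal is correct in substance and stays very close to the paper's own argument, with one direction computed by a different (but equally elementary) route. For the implication from periodicity to vanishing coefficients, the paper does not split the integral over the $N$ subintervals; it introduces the shifted function $f_N(x)=f\big(x+\tfrac{1}{N}\bmod 1\big)$, computes $\hat{f_N}(n)=e^{2\pi in/N}\hat{f}(n)$ by a single change of variable, and concludes from $f=f_N$ that $\big(1-e^{2\pi in/N}\big)\hat{f}(n)=0$. Your decomposition into $N$ pieces with the geometric sum $\sum_{k=0}^{N-1}e^{-2\pi ink/N}$ reaches the same conclusion; both arguments quietly need the observation that the hypothesis $f(x+\tfrac1N)=f(x)$ on $[0,1-\tfrac1N)$ iterates to $f(s+\tfrac kN)=f(s)$ for $s\in[0,\tfrac1N)$ and all $k\le N-1$ (in the paper's version this is what makes $f=f_N$ on all of $[0,1)$, including $[1-\tfrac1N,1)$). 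The converse direction is identical in both: only frequencies divisible by $N$ survive, and every symmetric partial sum is $\tfrac1N$-periodic.

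On the boundary point: you are right that $x=0$ is the only delicate spot, but your proposed repair cannot work in general, because the implication from (2) to (1) is genuinely false at $x=0$ as the lemma is literally stated. Take $f(0)=0$ and $f\equiv\tfrac12$ on $(0,1)$: then $\hat f(n)=0$ for all $n\ne0$, the series hypothesis holds on $(0,1)$, and condition (2) holds, yet $f(\tfrac1N)=\tfrac12\ne0=f(0)$. Since the hypotheses place no constraint whatsoever on $f(0)$, no argument ``extending the equality to $x=0$ from the interior'' can succeed. The paper ignores this entirely ((2) $\implies$ (1) is dismissed as ``a straightforward calculation''), and, as you anticipate, it is harmless downstream: in Lemma \ref{hperiodic} the periodicity of $h(x)=(F(x)+F(x^-))/2-x$ is only invoked at points of continuity of $F$, which are dense in $(0,1-\tfrac1N)$, and the conclusion for $F$ is then extended to all of $[0,1)$ by right-continuity. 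A clean fix is simply to state condition (1) for $x\in\big(0,1-\tfrac1N\big)$.
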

\begin{proof}
A straightforward calculation shows that (2) $\implies$ (1).
Let $f_N\colon[0,1)\to[0,1]$ be defined by $f_N(x)=f(x+\frac{1}{N}\mod 1)$.
Calculate that $$\hat{f_N}(n)=e^{2\pi in/N}\hat{f}(n)\text{ for each }n\in\Z.$$
Let $g=f-f_N$. Then $0=\hat{g}(n)=\hat{f}(n)-\hat{f_N}(n)
=\big(1-e^{2\pi in/N}\big)\hat{f}(n)$ for all $n\in\Z$,
which shows that (1) $\implies$ (2).
\end{proof}

\begin{lemma}\label{hperiodic}
Let $X\colon\Omega\to[0,1)$ be a random variable,
$F(x)=P(X\le x)$ its cumulative distribution function,
and $\varphi(t)=Ee^{itX}$ its characteristic function.
Then the following conditions are equivalent:
\begin{equation}
F\Big(x+\frac{k}{N}\Big)=F(x)+\frac{k}{N}
\textnormal{ for all }x\in\Big[0,\frac{1}{N}\Big)
\textnormal{ and }k=1,\ldots,N-1,
\end{equation}\begin{equation}
\varphi(2\pi n)=0\textnormal{ whenever }n
\textnormal{ is not a multiple of }N.\end{equation}
\end{lemma}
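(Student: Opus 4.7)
The plan is to introduce the auxiliary function
$$h(x) = \frac{F(x) + F(x^-)}{2} - x$$
on $(0,1)$, for which Theorem \ref{minusx} supplies the Fourier representation with coefficients $\hat h(n) = \varphi(-2\pi n)/(2\pi i n)$ for $n \neq 0$. Both conditions of the lemma will be shown equivalent to the $1/N$-periodicity of $h$, and then Lemma \ref{zerocoefficients} applied to $h$ --- whose range restriction to $[0,1]$ plays no essential role in the proof, which merely manipulates Fourier coefficients --- furnishes the bridge.

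For the first equivalence, (1) $\Leftrightarrow$ ``$h(x+1/N) = h(x)$ for all $x \in (0,(N-1)/N)$'', the forward direction is a short calculation. A trivial iteration shows that (1) is the same as $F(y + 1/N) = F(y) + 1/N$ for every $y \in [0, (N-1)/N)$. Taking left limits in this identity yields $F((y+1/N)^-) = F(y^-) + 1/N$, and substituting both equalities into the definition of $h$ gives $h(y+1/N) = h(y)$. For the reverse direction, $1/N$-periodicity of $h$ gives $F(y+1/N) - F(y) = 1/N$ at each $y$ such that both $y$ and $y+1/N$ are continuity points of $F$, a dense subset of $[0,(N-1)/N)$; right-continuity of $F$ then lifts the identity to the whole interval, and iterating recovers (1).

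For the second equivalence, condition (2) translates via the conjugate symmetry $\varphi(-t) = \overline{\varphi(t)}$ and the formula $\hat h(n) = \varphi(-2\pi n)/(2\pi i n)$ into the statement that $\hat h(n) = 0$ for every nonzero $n \in \Z$ that is not a multiple of $N$ (the value $\hat h(0) = \tfrac{1}{2} - EX$ is unconstrained, which is consistent since $0$ is a multiple of $N$). Applying the argument of Lemma \ref{zerocoefficients} to $h$ --- using the pointwise Fourier expansion from Theorem \ref{minusx} and the identity $\widehat{h(\cdot + 1/N)}(n) = e^{2\pi i n/N}\hat h(n)$ --- then yields the equivalence of this Fourier-vanishing condition with the $1/N$-periodicity of $h$, completing the chain.

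The main obstacle I foresee is the bookkeeping in the first step: the pointwise identity (1) is demanded at every point, including atoms of $F$, whereas the Fourier-analytic statement about $h$ is only meaningful on $(0,1)$ and is sensitive to the symmetrized values $(F(x)+F(x^-))/2$ rather than $F(x)$ itself. The saving observation is the density of continuity points of $F$ together with right-continuity, which together let identities proved on a dense set be promoted to the whole interval; once this is understood, the Fourier machinery runs smoothly.
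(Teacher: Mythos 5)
Your proposal is correct and follows essentially the same route as the paper: the same auxiliary function $h(x)=(F(x)+F(x^-))/2-x$, the same appeal to Theorem \ref{minusx} together with Lemma \ref{zerocoefficients} to identify $1/N$-periodicity of $h$ with the vanishing of $\varphi(2\pi n)$ off multiples of $N$, and the same density-of-continuity-points plus right-continuity argument to pass between periodicity of $h$ and condition (1). Your explicit remark that the range restriction in Lemma \ref{zerocoefficients} is immaterial is a small point the paper leaves implicit, but the substance is identical.
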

\begin{proof}
Let $h(x)=(F(x)+F(x^-))/2-x$.
By Theorem \ref{minusx} and Lemma \ref{zerocoefficients},
$h$ has period $1/N$ $\iff\varphi(2\pi n)=0$ whenever $n$ is not a multiple of $N$.
Next we show that $h$ has peiod $1/N\iff$ $F(x)-x$ has period $1/N$.
Indeed, notice that all three functions $F(x),F(x^-),h(x)$
have the same points of continuity.
Therefore if $h$ has period $1/N$, then for any point
of continuity $x$, $$F(x)-x=h(x)=h(x+1/N)=F(x+1/N)-(x+1/N).$$
Since points of continuity are dense and $F$ is continuous from
the right everywhere, this extends to all of $[0,1)$.
The final trick is
$$F\Big(x+\frac{k}{N}\Big)-\Big(x+\frac{k}{N}\Big)=F(x)-x
\iff F\Big(x+\frac{k}{N}\Big)=F(x)+\frac{k}{N}.$$
\end{proof}

If $X$ is a random variable, let $\varphi_X(t)=Ee^{itX}$
be its characteristic function.

\begin{theorem}\label{mybest}
Let $X,Y\colon\Omega\to[0,1)$ be independent random variables.
Let $N$ be the smallest positive integer such that
\begin{equation}\label{oY}
P\Big(Y\in\big\{\frac{k}{N}\colon k=0,1,\dots,N-1\big\}\Big)=1.\end{equation}
Then
\begin{equation}\label{xsxy}X\sim(Y+X)\bmod{1}\end{equation}
if and only if
\begin{equation}\label{periodic}
P\Big(\frac{k}{N}<X\le\frac{k+t}{N}\Big)=P\Big(\frac{l}{N}<X\le\frac{l+t}{N}\Big)
\end{equation}
for all $t\in(0,1)$ and $k,l\in\{0,1,\ldots,N-1\}$.
\end{theorem}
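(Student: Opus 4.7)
The plan is to translate both sides of the biconditional into statements about the Fourier coefficients $\varphi_X(2\pi n)$, using the bridge provided by Lemma~\ref{hperiodic}, which already identifies condition~(\ref{periodic}) (after the standard bookkeeping of atoms, since it exactly describes the CDF of $X$ being invariant under translation by $1/N$) with the spectral condition
\[
\varphi_X(2\pi n)=0\quad\text{whenever } N\nmid n.
\]
The multiplicative identity (\ref{independentsmarts}) turns $X\sim(Y+X)\bmod 1$ into the pointwise equation $\varphi_X(2\pi n)=\varphi_X(2\pi n)\varphi_Y(2\pi n)$ for every $n\in\Z$, so the whole argument reduces to understanding precisely for which $n$ one has $\varphi_Y(2\pi n)=1$.

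The technical heart of the proof, and really the only place where the \emph{minimality} of $N$ is used, is the claim
\[
\varphi_Y(2\pi n)=1\quad\Longleftrightarrow\quad N\mid n.
\]
The implication $(\Leftarrow)$ is immediate from the hypothesis (\ref{oY}), since then $nY\in\Z$ almost surely. For $(\Rightarrow)$, I would apply Corollary~\ref{char1} to conclude that $Y$ is supported on both $\{k/N:0\le k<N\}$ and $\{k/n:0\le k<n\}$, hence on their intersection; viewing these as cyclic subgroups of $\R/\Z$, the intersection is $\{k/d:0\le k<d\}$ with $d=\gcd(n,N)$. By the minimality of $N$ this forces $d=N$, i.e.\ $N\mid n$.

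With this in hand both directions are short. For ($\Rightarrow$), assume (\ref{xsxy}); then (\ref{independentsmarts}) gives $\varphi_X(2\pi n)\bigl(1-\varphi_Y(2\pi n)\bigr)=0$ for all $n$, so whenever $N\nmid n$ we have $\varphi_Y(2\pi n)\ne 1$ and hence $\varphi_X(2\pi n)=0$; Lemma~\ref{hperiodic} then yields (\ref{periodic}). For ($\Leftarrow$), assume (\ref{periodic}); by Lemma~\ref{hperiodic} we have $\varphi_X(2\pi n)=0$ when $N\nmid n$, while for $N\mid n$ we have $\varphi_Y(2\pi n)=1$. In either case the product formula gives $\varphi_{(Y+X)\bmod 1}(2\pi n)=\varphi_X(2\pi n)$ for all $n\in\N$, and since both random variables take values in $[0,1)$ the hypothesis $P(\,\cdot\,=1)=0$ required by Theorem~\ref{coefficients} holds automatically, so $X\sim(Y+X)\bmod 1$.

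The main obstacle I anticipate is the gcd step, which must be handled carefully enough that the minimality of $N$ is genuinely used but without becoming a digression; everything else is a direct assembly of the preceding lemmas. A minor nuisance worth flagging is verifying that (\ref{periodic}) really matches condition~(1) of Lemma~\ref{hperiodic} at the boundary points $k/N$ (where $X$ may have atoms); this is a routine limit/telescoping argument using right-continuity of $F_X$ together with the fact that summing over $k=0,\dots,N-1$ produces $1-P(X=0)$ on both sides.
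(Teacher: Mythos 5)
Your route is the paper's route almost verbatim: reduce $X\sim(Y+X)\bmod 1$ via (\ref{independentsmarts}) and Theorem \ref{coefficients} to $\varphi_X(2\pi n)\bigl(1-\varphi_Y(2\pi n)\bigr)=0$ for all $n$, identify $\{n\colon\varphi_Y(2\pi n)=1\}$ with the multiples of $N$ using Corollary \ref{char1} and the minimality of $N$, and pass to the periodicity statement through Lemma \ref{hperiodic}. Your gcd argument is correct and is in fact more than the paper supplies --- the paper simply asserts that (\ref{oY}) is equivalent to ``$\varphi_Y(2\pi n)=1\iff N\mid n$'' --- so that part is a welcome filling-in of detail rather than a divergence.

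The one step that would fail is precisely the one you dismiss as a ``minor nuisance,'' and no telescoping will repair it, because the two conditions you are trying to match are not equivalent. Condition (\ref{periodic}) is phrased with intervals $\bigl(\tfrac{k}{N},\tfrac{k+t}{N}\bigr]$ open at the left endpoint, so it carries no information about atoms of $X$ at the lattice points $\tfrac{k}{N}$; condition (1) of Lemma \ref{hperiodic} (equivalently $\varphi_X(2\pi n)=0$ for $N\nmid n$) does constrain them, since it forces $F\bigl(\tfrac{k}{N}\bigr)=F(0)+\tfrac{k}{N}$ and hence, letting $x\to\tfrac1N^{-}$ with $k=N-1$, pins down $F(0)=P(X=0)$ and the atom masses. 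Your telescoping only yields $1=F(0)+Ng(1^{-})+\sum_{j=1}^{N-1}P\bigl(X=\tfrac{j}{N}\bigr)$, which leaves those atoms free. Concretely, take $X\equiv 0$ and $Y\equiv\tfrac12$, so $N=2$: every probability in (\ref{periodic}) is zero and (\ref{periodic}) holds vacuously, yet $(Y+X)\bmod 1\equiv\tfrac12\not\sim X$. So the implication (\ref{periodic})$\Rightarrow$(1) on which your plan relies is false, and with it the ``if'' direction of the theorem as literally stated; the ``only if'' direction is unaffected since (1) does imply (\ref{periodic}). You should know that the paper's own proof commits the identical sin --- it writes ``By Lemma \ref{hperiodic}, this is equivalent to (\ref{periodic})'' with no justification --- so you have not missed anything the author caught; but the honest fix is to restate (\ref{periodic}) with the endpoints included, e.g.\ $P\bigl(\tfrac{k}{N}\le X<\tfrac{k+t}{N}\bigr)$, or directly as $F\bigl(x+\tfrac{k}{N}\bigr)=F(x)+\tfrac{k}{N}$, after which your argument (and the paper's) goes through.
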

\begin{proof}
Notice that since $X$ and $Y$ are independent,
see (\ref{independentsmarts}),
\begin{equation}\label{fouriercoeff}
\varphi_{(Y+X)\bmod{1}}(2\pi n)=\varphi_X(2\pi n)\varphi_Y(2\pi n)\text{ for all }n\in\N.
\end{equation}
Notice that from Corollary \ref{char1} we can conclude that
(\ref{oY}) is equivalent to
\begin{equation}\label{multiples}\varphi_Y(2\pi n)=1\iff n\text{ is a multiple of } N.\end{equation}
In view of (\ref{fouriercoeff}), $X\sim(Y+X)\bmod{1}$ is equivalent to
$$\varphi_X(2\pi n)\varphi_Y(2\pi n)=\varphi_X(2\pi n)\text{ for all }n\in\N,$$
which --- in view of (\ref{multiples}) --- is equivalent to \begin{equation}\label{zeros}
\varphi_X(2\pi n)=0\text{ whenever }n\text{ is not a multiple of }N\in\N.\end{equation}
By Lemma \ref{hperiodic}, this is equivalent to (\ref{periodic}).
\end{proof}

Finally, Peter Schatte's Theorem \ref{PeterSchatte}
and my Theorem \ref{mybest} can be unified
in the following way.

\begin{theorem}\label{final}
Let $X,Y\colon\Omega\to[0,1)$ be independent random variables.
Suppose that $X\sim(Y+X)\bmod{1}$.
Then\begin{equation}\label{howeversmall}
\sup_{t\in[0,1)}\big|P\big(X\le t\big)-t\big|\le
\frac{1}{N},\end{equation}
where $$N=\min\Big\{n\in\N\colon P\big(nY\in\Z\big)=1\Big\}.$$
In particular, $X\sim U[0,1)$ if $N=\min\emptyset=\infty$.
\end{theorem}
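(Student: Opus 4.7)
The plan is to split into the cases $N = \infty$ and $N < \infty$. The infinite case is routine: by definition, $P(nY \in \Z) < 1$ for every positive integer $n$, and Corollary \ref{char1} then gives $\varphi_Y(2\pi n) \ne 1$ for every $n \in \N\setminus\{0\}$, so Theorem \ref{PeterSchatte} forces $X \sim U[0,1)$. The bound (\ref{howeversmall}) then holds with right-hand side $0$.

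For the main case $N < \infty$, the idea is to squeeze a stronger conclusion out of the Fourier step already used in the proof of Theorem \ref{mybest}. The assumption $X \sim (Y+X)\bmod{1}$ together with (\ref{independentsmarts}) gives $\varphi_X(2\pi n)\bigl(1 - \varphi_Y(2\pi n)\bigr) = 0$ for every integer $n$, and Corollary \ref{char1} together with the minimality of $N$ ensures that $1 - \varphi_Y(2\pi n)$ vanishes precisely when $N$ divides $n$. Hence $\varphi_X(2\pi n) = 0$ whenever $n$ is not a multiple of $N$, and Lemma \ref{hperiodic} translates this Fourier vanishing into the functional identity
$$F\Bigl(x + \frac{k}{N}\Bigr) = F(x) + \frac{k}{N} \quad \text{for all } x \in \Bigl[0,\frac{1}{N}\Bigr) \text{ and } k = 1,\ldots,N-1,$$
where $F(t) = P(X \le t)$.

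With this identity the remainder is a short calculation. Rearranged as $F(x + k/N) - (x + k/N) = F(x) - x$, it says that $h(t) := F(t) - t$ is periodic on $[0,1)$ with period $1/N$, so $\sup_{t \in [0,1)} |h(t)| = \sup_{u \in [0,1/N)} |h(u)|$. Taking $k = N-1$ and letting $x \to (1/N)^-$, combined with $F(1^-) = 1$ (since $X < 1$ almost surely), gives $F\bigl((1/N)^-\bigr) = 1/N$. Monotonicity of $F$ then pins $F(u)$ into $[0, 1/N]$ for every $u \in [0, 1/N)$, and since $u$ itself also lies in that same interval we conclude $|F(u) - u| \le 1/N$, which is exactly (\ref{howeversmall}).

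I do not foresee any real obstacle, because the delicate Fourier-analytic content is already isolated in Lemma \ref{hperiodic}. The only subtle point to watch is that Theorem \ref{mybest}'s condition (\ref{periodic}) is formally weaker than the functional identity above, since (\ref{periodic}) involves only the half-open intervals $(k/N, (k+t)/N]$ and therefore cannot see the atoms of $X$ at the points $l/N$; for this reason the argument should invoke Lemma \ref{hperiodic} directly rather than going through (\ref{periodic}).
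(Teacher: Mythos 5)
Your proof is correct and follows essentially the same route as the paper's: the paper likewise derives the $1/N$-periodicity of $F(t)-t$ from the Fourier-coefficient argument behind Theorem \ref{mybest} (handling $N=\infty$ via Theorem \ref{PeterSchatte}) and then concludes by monotonicity of $F$ on $[0,1/N)$. Your decision to invoke Lemma \ref{hperiodic} directly, and to work with the left limit $F\big((1/N)^-\big)=1/N$ rather than the paper's $F(1/N)=1/N$, is in fact a slight improvement in rigor, since condition (\ref{periodic}) cannot see atoms at the points $k/N$ and $F$ may indeed carry an atom at $1/N$ when $P(X=0)>0$.
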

\begin{proof}
Suppose that $N<\infty$.
Let $F(t)=P(X\le t)$. 
From Theorem \ref{mybest} it follows that
$F(1/N)=1/N$ and that $F(t)-t$ has period $1/N$.
Hence it is enough to consider $|F(t)-t|$ for $t\in[0,1/N)$,
which is bounded above by $1/N$ since $F$ is nondecreasing
and thus between $0$ and $1/N$.
\end{proof}

Notice that however small the value in (\ref{howeversmall})
may be, the alternative measure of the distance
between the distribution of $X$, $\mu_X$, 
and the Lebesgue measure $\lambda$
$$\sup_{A\in\borel([0,1))}\big|\mu_X(A)-\lambda(A)\big|$$
is equal to 1 if $X$ is concentrated on a set of Lebesgue-measure zero
and is equal to \begin{equation}\label{diffe}
\int_{f<1}(1-f(x))dx\end{equation}
if $X$ has density $f$.
For example, the density
$$f(x)=2\sum_{k=0}^{N-1}
\mathds{1}_{\big[\frac{k}{N},\frac{k+1/2}{N}\big)}(x)$$
has period $1/N$ so that (\ref{howeversmall}) is satisfied
but the value of (\ref{diffe}) is $1/4$ for each $N\in\N$.

The number $N$ in Theorem \ref{final} can be considered
as the rank of a discrete rational-valued random variable.
In computers any continuous random variable
is simulated by such variables.
If an algorithm for generating a simulated continuous random variable
is capable of producing $n$ distinct values, then the rank
of such a discrete variable is at least $n$ but it may be much greater
if the values are not distributed evenly.
In fact, the rank is the least common multiple of the denominators
of those $n$ distinct values when they are expressed as
irreducible fractions.

If we try to apply Theorem \ref{mybest} in practice
to conclude that a given dataset conforms to Benford's law
we run into a fascinating paradox.
On the one hand,
if $X\sim(1/N+X)\bmod{1}$ for a large $N$ then Theorem \ref{mybest}
forces $X$ to be very close to $U[0,1)$.
On the other hand, for any distribution on $[0,1)$
a translation modulo 1 by a small number $1/N$
does not affect the distribution very much,
which shows that nothing at all can be concluded
from the fact that $X$ and $(1/N+X)$ are similarly distributed.

\begin{definition}\label{scaleinvariantdef}
A positive random variable $X\colon\Omega\to(0,\infty)$
is scale-invariant for a given base $b>1$if\begin{equation}
\label{def14a}S_b(X)\sim S_b(cX)\textnormal{ for all }c>0
\end{equation}or equivalently\begin{equation}\label{def14b}
\log_bX\bmod{1}\sim(a+\log_bX)\bmod{1}
\textnormal{ for all }a\in\R.\end{equation}\end{definition}

Let us ask how the set of constants in this definition
can be reduced while preserving equivalence.
Naturally, (\ref{def14a}) can do with $c\in(1,b)$
and (\ref{def14b}) can do with $a\in(0,1)$
and we already know that one irrational value
can do in (\ref{def14b}) and, by the same token,
one value for $c$ in (\ref{def14a})
provided that $\log_bc$ is irrational, Theorem \ref{scalesummary}.

Notice that each irreducible fraction $a=k/N$ in (\ref{def14b})
implies that $\log_bX\bmod{1}$ has period $1/N$ on $[0,1)$,
Theorem \ref{mybest}. So if we have a finite set $A$ of rational numbers
and assume (\ref{def14b}) with $a\in A$ then we can conclude
that $\log_bX\bmod{1}$ is distributed with period $1/N$ on $[0,1)$,
where $N$ is the least common multiple of all the denominators
of the irreducible fractions from $A$.
Consequently,  Definition \ref{scaleinvariantdef}
can do with any infinite subset of constants
because it is enough to conclude that $X$ is $b$-Benford,
which in turn implies scale-invariance for all remaining constants.

\begin{theorem}\label{gdtygfeexed}
Let $X\colon\Omega\to[0,1)$.
Let $A\subset\Q\cap(0,1)$ be infinite.
Suppose that $X\sim(a+X)\bmod{1}$ for each $a\in A$.
Then $X\sim U[0,1)$.
\end{theorem}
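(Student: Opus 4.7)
The plan is to exploit the relation $X\sim(a+X)\bmod 1$ at the level of Fourier coefficients for each $a\in A$, and then combine the resulting constraints across the infinitely many values of $a$ to force every nontrivial Fourier coefficient of $X$ to vanish, after which Theorem \ref{coefficients} delivers $X\sim U[0,1)$.

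First, I would fix $a\in A$ and treat it as a constant random variable $Y\equiv a$, which is trivially independent of $X$. Writing $a$ in lowest terms as $a=p/q$ with $\gcd(p,q)=1$, identity (\ref{independentsmarts}) turns the hypothesis $X\sim(a+X)\bmod 1$ into
\[\varphi_X(2\pi n)=\varphi_X(2\pi n)\cdot e^{2\pi inp/q}\quad\text{for every }n\in\Z.\]
Because $\gcd(p,q)=1$, the factor $e^{2\pi inp/q}$ equals $1$ precisely when $q\mid n$, so the identity above forces $\varphi_X(2\pi n)=0$ for every positive integer $n$ not divisible by $q$. (Equivalently, one could invoke Theorem \ref{mybest} with $N=q$ and pass through the intermediate characterization (\ref{zeros}); the computation is the same.)

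Next, I would use an elementary counting observation: for any fixed $M\in\N$, only finitely many rationals in $(0,1)$ have denominator at most $M$, so an infinite subset of $\Q\cap(0,1)$ cannot have bounded denominators. Hence the set $\{q_a:a\in A\}$ of reduced denominators is unbounded. For any fixed $n\in\N\setminus\{0\}$, pick $a\in A$ with $q_a>n$; then $q_a\nmid n$ and the first step yields $\varphi_X(2\pi n)=0$. Together with $\varphi_{U[0,1)}(2\pi n)=0$ from (\ref{u01}), Theorem \ref{coefficients} concludes that $X\sim U[0,1)$.

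There is no real obstacle here; the proof is a two-line assembly of machinery already developed. The only points requiring care are the arithmetic identification $e^{2\pi inp/q}=1\iff q\mid n$ (which relies on the coprimality $\gcd(p,q)=1$) and the elementary-but-essential observation that an infinite subset of $\Q\cap(0,1)$ must admit unbounded denominators — without that, the Fourier coefficient argument would still leave the multiples of a common denominator unconstrained.
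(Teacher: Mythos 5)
Your proof is correct. It differs from the paper's in how the per-$a$ information is packaged and combined. The paper invokes Theorem \ref{final}: for each $n$ it picks $a=k/N\in A$ with $N\ge n$ and concludes $\sup_{t}|P(X\le t)-t|\le 1/N$, so the supremum is zero in the limit; the argument is quantitative and approximate at each stage, becoming exact only as $N\to\infty$. You instead stay entirely at the level of Fourier coefficients: each $a=p/q$ forces the exact vanishing $\varphi_X(2\pi n)=0$ for all $n$ with $q\nmid n$, the unboundedness of the reduced denominators then kills every nonzero coefficient, and Theorem \ref{coefficients} finishes. Your route is the more direct one for this particular statement --- it bypasses Lemma \ref{hperiodic}, the periodicity of $F(t)-t$, and the $1/N$ estimate altogether --- while the paper's route has the advantage of exhibiting the theorem as an immediate corollary of its main quantitative result. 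Both hinge on the same two points you correctly flag: the equivalence $e^{2\pi inp/q}=1\iff q\mid n$ for $\gcd(p,q)=1$ (which is the content of Corollary \ref{char1} specialized to a constant), and the fact that an infinite subset of $\Q\cap(0,1)$ cannot have bounded denominators, which the paper uses implicitly in the phrase ``there is an $a=\frac{k}{N}\in A$ with $N\ge n$.''
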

\begin{proof}
For any $n\in\N$ there is an $a=\frac{k}{N}\in A$ with $N\ge n$.
Let $Y=a$. By Theorem \ref{final},
$$\sup_{t\in[0,1)}\big|P\big(X\le t\big)-t\big|\le
\frac{1}{N}.$$
So the supremum is actually zero.
\end{proof}

Let us compare Theorem \ref{gdtygfeexed}
with Whittaker's scale-invariance theorem
\cite[Corollary to Theorem 1]{scaleinvariant}.
He also assumes scale-invariance for an infinitely countable set
but it has to have a certain specified form based on an irrational number.
This is clearly a different approach.

\section{Base-invariance}

In mathematical folklore, Benford's Law is the observation
that in various large collections of data,
whether from natural sciences or from socio-economic statistics
or even collected randomly from various mixed sources,
the logarithms base 10 of the recorded numbers
seem to be uniformly distributed modulo 1.
It is natural to expect that this should not be the unique property of the number 10
but rather should be similarly well satisfied for base, say, 8.
This is the origin of the notion of base-invariance for Benford's Law.

It is an entirely different matter
whether a given random variable (which is a specific mathematical object)
satisfies Benford's law for base 10 and base 8 simultaneously.
For example, if $U$ is uniformly distributed on $[0,1)$
then the random variable $10^U$ satisfies Benford's law for base 10
but not for base 8.

In this section we study the set of all those bases for which
a given random variable satisfies Benford's law
and try to establish base-invariance characterizations for Benford's law
where it is assumed that a given variable
has the same significand distribution for two or more distinct bases
(with additional assumptions) and
concluded that it must be Benford for those bases.
An example, due to James V.~Whittaker,
is given of a continuous random variable that satisfies Benford's law
for all bases in the interval $(1,10]$.

We will write that a strictly positive random variable $X$ is $b$-Benford iff
\begin{equation}\label{bbenford}
\log_bX\bmod{1}\sim U[0,1),\end{equation}
where $b\in(1,\infty)$ is any base, not necessarily an integer.
Equivalently, $X$ is $b$-Benford if and only if
\begin{equation}\label{bZ}
X=b^Z\text{ where }Z\bmod{1}\sim U[0,1).\end{equation}
Thus examples of random variables with the Benford property are provided
by constructing appropriate variables with uniform distribution modulo 1.
For example:
\begin{equation}\text{the uniform distribution }
U[c,d)\text{ with }d-c\in\N\end{equation}
\begin{equation}\text{the density }
\sum_{n\in\Z}a_n\mathds{1}_{[a+n,a+n+1)}(x)
\text{ \ with }a\in[0,1),\ a_n\ge 0,\ \sum_{n\in Z}a_n=1\end{equation}
\begin{equation}
\text{the symmetric triangular distribution on }[c,d]\text{ wth }d-c\in\N
\end{equation}\begin{equation}
\text{the density }f(x)=\begin{cases}x^2&\text{if } 0\le x\le 1\\
1-(x-1)^2&\text{if } 1\le x\le 2\\
0&\text{otherwise}.\end{cases}\end{equation}
A generic description of random variables $Z$ with $Z\bmod{t}\sim U[0,t)$
is given by James V.~Whittaker in \cite[Th.~~2, Th.~3]{scaleinvariant},
where the condition that $Z\bmod{t}\sim U[0,t)$ is referred to as
$Z$ being scale-invariant with $t$ in its spectrum.

For a given random variable $X$, the set
\begin{equation}\label{spectrum}
\{b\in(1,\infty)\colon X\text{ is }b\text{-Benford}\}
\end{equation}
is known to be bounded from above,
e.g. \cite{schatte1981}, 
\cite[p.~392]{baseb}, \cite[Lemma 1]{scaleinvariant},
\cite{lolbert},
or our Theorem \ref{boundedspectrum}.
This puts another restraint on the notion of base-invariance:
no set of numbers can be close to satisfying Benford's Law
for arbitrarily large bases.

The most basic property of the set (\ref{spectrum}),
cf.~\cite[Lemma 1]{scaleinvariant},
is that
\begin{equation}
X\text{ is }b\text{-Benford }\implies X\text{ is }\sqrt[n]{b}
\text{-Benford for all }n\in\N.\end{equation}
Indeed, using (\ref{bZ}), if $X=b^Z$ with $Z\bmod{1}\sim U[0,1)$
then \begin{equation}\label{aswddd4ed}
X=(\sqrt[n]{b})^{nZ}\text{ with }
nZ\bmod{1}\sim U[0,1)\end{equation}
and thus $X$ is $\sqrt[n]{b}$-Benford
for each $n\in\N$.

Let us take a closer look at this fact
to see what we can conclude about the distribution of
$aX\bmod{1}$ for $a\in(0,\infty)$
assuming that $X\bmod{1}\sim U[0,1)$.

\begin{proposition}\label{lebinv}
If $X\bmod{1}\sim U[0,1)$ then
$nX\bmod{1}\sim U[0,1)$ for all $n\in\N$.
\end{proposition}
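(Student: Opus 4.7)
The plan is to use the Fourier-coefficient machinery developed earlier in the paper: by Theorem \ref{coefficients}, a distribution on $[0,1)$ is uniquely determined by the values $\varphi(2\pi k)$ for $k\in\N$, and the uniform distribution $U[0,1)$ is characterized by (\ref{u01}), namely $\varphi_{U[0,1)}(2\pi k)=0$ for all $k\in\N\setminus\{0\}$. So it suffices to compute the Fourier coefficients of $(nX)\bmod 1$ at nonzero integers and verify that they vanish.

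The key step is a double application of the identity (\ref{modulosmarts}), which allows us to strip modulo-$1$ reductions off both sides. For each $k\in\N\setminus\{0\}$,
$$\varphi_{(nX)\bmod 1}(2\pi k)=\varphi_{nX}(2\pi k)=Ee^{2\pi iknX}=\varphi_X(2\pi kn)=\varphi_{X\bmod 1}(2\pi kn).$$
Since $X\bmod 1\sim U[0,1)$ and $kn\in\N\setminus\{0\}$, the right-hand side equals zero by (\ref{u01}).

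Hence the Fourier coefficients of $(nX)\bmod 1$ agree with those of $U[0,1)$ at every nonzero integer, and Theorem \ref{coefficients} forces $(nX)\bmod 1\sim U[0,1)$. There is essentially no obstacle; the only subtle point is that the hypothesis is stated for $X\bmod 1$ rather than $X$ itself, so we must not silently assume $X$ is concentrated on $[0,1)$ — this is precisely what the two applications of (\ref{modulosmarts}) are designed to handle, one to absorb the outer modulo reduction of $nX$ and one to reintroduce the modulo reduction of $X$ so that the uniform-distribution hypothesis can be plugged in.
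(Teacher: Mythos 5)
Your proof is correct, and it takes a genuinely different route from the paper's. The paper argues directly at the level of measures: it introduces the map $T_a(x)=ax\bmod 1$ on $[0,1)$, computes the preimage $T_a^{-1}([0,t])$ explicitly as a union of intervals, checks that its Lebesgue measure equals $t$ exactly when $a\in\N$, and then pushes the hypothesis $X\bmod 1\sim U[0,1)$ through $T_n$ using the identity $T_n(x\bmod 1)=nx\bmod 1$, which holds only for integer $n$. Your argument instead routes everything through the Fourier machinery already established in the scale-invariance section: the two applications of (\ref{modulosmarts}), the computation $\varphi_{nX}(2\pi k)=\varphi_X(2\pi kn)$, the vanishing (\ref{u01}), and the uniqueness statement of Theorem~\ref{coefficients}. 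There is no circularity, since all of that material precedes the proposition. What each approach buys: yours is shorter and makes the role of integrality transparent in one spot (you need $kn$ to be a nonzero integer so that (\ref{modulosmarts}) and (\ref{u01}) apply; for non-integer $a$ the value $\varphi_X(2\pi ka)$ is simply not controlled by the hypothesis), whereas the paper's proof is elementary and self-contained, avoids invoking the uniqueness theorem, and its explicit formula (\ref{husfcsrw}) is deliberately reused later for the comparison with (\ref{fdfedfet}) in the discussion of Hill's base-invariance condition — a side benefit your argument does not provide.
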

\begin{proof}
For each $a\in(0,\infty)$,
let $T_a\colon[0,1)\to[0,1)$ be given by $T_a(x)=ax\bmod{1}$.
Notice that for each $x\in\R$ we have the equivalence
$a\in\N\iff T_a(x\bmod{1})=ax\bmod{1}$.
Let $m=\lfloor a\rfloor$, so that $m\in\N$ and $m\le a<m+1$.
Then \begin{equation}
T_a(x)=\sum_{k=0}^{m-1}\mathds{1}_{\big[\frac{k}{a},\frac{k+1}{a}\big)}(x)\big(ax-k\big)
+\mathds{1}_{\big[\frac{m}{a},1\big)}(x)\big(ax-m\big).
\end{equation}
Thus for any $t\in[0,1]$
\begin{equation}\label{rozpiska}
T_a^{-1}([0,t])=\bigcup_{k=0}^{m-1}\Big[\frac{k}{a},\frac{k+t}{a}\Big]
\cup\begin{cases}
\Big[\frac{m}{a},\frac{m+t}{a}\Big]&\colon t\le a-m\\
\Big[\frac{m}{a},1\Big)&\colon a-m\le t
\end{cases}\end{equation}
and consequently
\begin{equation}\label{husfcsrw}
\lambda\big(T_a^{-1}([0,t])\big)=
\begin{cases}
\Big(\frac{m+1}{a}\Big)t&\colon t\le a-m\\
\Big(\frac{m}{a}\Big)t+\Big(1-\frac{m}{a}\Big)&\colon a-m\le t,
\end{cases}\end{equation}
where $\lambda$ denotes the Lebesgue measure.

A look at (\ref{husfcsrw}) reveals that
$$a\in\N\ \ \iff\ \ \lambda\big(T_a^{-1}([0,t])\big)=t
\text{ \ for all }t\in[0,1],$$
but we will not be able to exploit the $\impliedby$ implication.

It is crucial to keep in mind the following elementary fact
of modulo arithmetic\begin{equation}\label{crucialhere}
a\in\N\ \ \iff\ \  T_a(x\bmod{1})=ax\bmod{1}
\text{ \ for all }x\in\R.\end{equation}
In the next line we write the same number in three different ways
using the notation introduced but we also
exploit the assumption $X\bmod{1}\sim U[0,1)$:
\begin{equation}\label{sgsfsfsdereds}
P\big(T_a(X\bmod{1})\le t\big)=
P\big(X\bmod{1}\in T_a^{-1}([0,t])\big)
=\lambda\big(T_a^{-1}([0,t])\big).\end{equation}
Now, if $a=n\in\N$ then
$$P(nX\bmod{1}\le t)=P(T_n(X\bmod{1})\le t)=
\lambda\big(T_n^{-1}([0,t])\big)=t$$
for all $t\in[0,1]$ and so $nX\bmod{1}\sim U[0,1)$ for any $n\in\N$.

However, if $a\not\in\N$ then
--- because of (\ref{crucialhere}) ---
we cannot use (\ref{sgsfsfsdereds})
to establish a relation between $P(aX\bmod{1}\le t)$
and (\ref{husfcsrw}),
and thus nothing can be concluded
about the distribution of $aX\bmod{1}$.
\end{proof}

\begin{theorem}\label{hut6tfdfecbh}
Let $X\colon\Omega\to(0,\infty)$ and $b>1$.
If $X$ is $b$-Benford then
\begin{equation}\label{tfdf5fd3fd}X\textnormal{ is }
\sqrt[n]{b}\textnormal{-Benford for all }n\in\N\end{equation}
and \begin{equation}\label{fd5ddh8d8}X^n\textnormal{ is }b
\textnormal{-Benford for all }n\in\N.\end{equation}
\end{theorem}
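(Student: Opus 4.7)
The plan is to reduce both claims to Proposition \ref{lebinv} via the exponential reformulation (\ref{bZ}) of the $b$-Benford property. Setting $Z=\log_b X$, the hypothesis that $X$ is $b$-Benford translates into $Z\bmod 1\sim U[0,1)$, so everything reduces to analyzing how multiplication by $n\in\N$ acts on $Z$ modulo $1$.

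First I would handle (\ref{tfdf5fd3fd}) by simply rewriting $X=b^Z=(\sqrt[n]{b})^{nZ}$. By the characterization (\ref{bZ}), $X$ is $\sqrt[n]{b}$-Benford exactly when $nZ\bmod 1\sim U[0,1)$, and this is precisely Proposition \ref{lebinv} applied to $Z$.

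Next, for (\ref{fd5ddh8d8}), I would observe that $X^n=b^{nZ}$, so (\ref{bZ}) tells us $X^n$ is $b$-Benford if and only if $nZ\bmod 1\sim U[0,1)$, which is again supplied by Proposition \ref{lebinv}.

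In fact (\ref{tfdf5fd3fd}) and (\ref{fd5ddh8d8}) are formally equivalent through the identity $\log_b(x^n)=n\log_b x=\log_{\sqrt[n]{b}}x$ (cf.~(\ref{s2})); under this identity the two distributions $\log_b(X^n)\bmod 1$ and $\log_{\sqrt[n]{b}}(X)\bmod 1$ coincide, so proving one claim automatically yields the other. The only nontrivial ingredient is Proposition \ref{lebinv}, which has already been established, and no further obstacle arises --- the theorem is essentially a repackaging of the invariance of $U[0,1)$ modulo $1$ under multiplication by a positive integer.
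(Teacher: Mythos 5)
Your proposal is correct and follows essentially the same route as the paper: both reduce the statement to Proposition \ref{lebinv} via the reformulation (\ref{bZ}), writing $X=b^Z=(\sqrt[n]{b})^{nZ}$, and both note that (\ref{tfdf5fd3fd}) and (\ref{fd5ddh8d8}) are equivalent via the identity $S_{\sqrt[n]{b}}(x)=S_b(x^n)$. The only cosmetic difference is that you also verify (\ref{fd5ddh8d8}) directly, whereas the paper obtains it solely from that equivalence.
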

\begin{proof}By (\ref{aswddd4ed}),
(\ref{tfdf5fd3fd}) is a consequence of Proposition \ref{lebinv}.
The identity $$S_{\sqrt[n]{b}}(x)=S_b(x^n)$$
shows that (\ref{tfdf5fd3fd}) and (\ref{fd5ddh8d8}) are equivalent.
\end{proof}

There are two ways of looking at this result.
First, the single random variable $X$
satisfies Benford's law simultaneously
for each base from the geometric sequence $\{\sqrt[n]{b}\}_{n=1}^\infty$.
Alternatively, in the sequence of random variables $X^n$
each of them satisfies Benford's law for the same base $b$.

Let us consider a converse to Theorem \ref{hut6tfdfecbh}.
The property of a random variable $X$ that
\begin{equation}\label{hillsdef}S_b(X)\sim S_b(X^n)
\text{ for all }n\in\N\end{equation}
or equivalently
\begin{equation}S_b(X)\sim S_{\sqrt[n]{b}}(X)
\text{ for all }n\in\N\end{equation}
is used by Theodore T.~Hill
to characterize Benford's law for a given base $b$,
Theorem \ref{hills}.
Compare (\ref{rozpiska}) with (\ref{fdfedfet})
\begin{equation}\label{fdfedfet}
S_b(x^n)\le b^t\ \ \iff\ \ 
b^{\frac{k}{n}}\le S_b(x)\le b^{\frac{k+t}{n}}
\text{ for some }k\in\{0,1,\ldots,n-1\}
\end{equation}
to see that his definition
of base-invariance \cite[Def.~3.1]{baseinvariance}
is equivalent to (\ref{hillsdef}).
It should be kept in mind that the condition expressed in this definition
is a statement not only about $X$ but also about $b$.
There is no way a base-invariance of this kind for some $X$ and the base 10
can be inferred about the same $X$ and base 8,
as can be seen from the trivial example $X=10^{U[0,1)}$.

\begin{theorem}[Theodore T.~Hill]\label{hills}
Let the random variable $X\colon\Omega\to(0,\infty)$ satisfy 
$$S_b(X)\sim S_b(X^n)
\textnormal{ \ \ for all }n\in\N$$
or equivalently
$$S_b(X)\sim S_{\sqrt[n]{b}}(X)
\textnormal{ \ \ for all }n\in\N$$
for a given $b\in(1,\infty)$.
Then $$P(S_b(X)\le b^t)=p+(1-p)t\textnormal{ \ \ for all }t\in[0,1],$$
where $p=P(S_b(X)=1)$.
\end{theorem}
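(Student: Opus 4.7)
The plan is to reduce the claim to a statement about the cumulative distribution function $F$ of $Z := \log_b S_b(X) \in [0,1)$ (so $F(0) = P(Z=0) = p$) and analyze $F$ via Fourier coefficients. Since $\log_b S_b(X^n) = nZ \bmod{1}$, the hypothesis $S_b(X) \sim S_b(X^n)$ translates to $Z \sim (nZ) \bmod{1}$ for every $n \in \N$; passing to characteristic functions via (\ref{modulosmarts}), this forces $\varphi_Z(2\pi k) = \varphi_Z(2\pi k n)$ for all $k \in \Z$, $n \in \N$. Setting $k=1$ and $c := \varphi_Z(2\pi)$ yields $\varphi_Z(2\pi n) = c$ and $\varphi_Z(-2\pi n) = \overline{c}$ for every $n \in \N$. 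Note that the target distribution $p\,\delta_0 + (1-p)\,U[0,1)$ has all its nonzero Fourier coefficients equal to the \emph{real} number $p$, so by Theorem \ref{coefficients} the whole proof reduces to showing $c = p$.

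Next I would plug these coefficients into Theorem \ref{minusx}. Writing $c = a + bi$ with $a,b \in \R$, the Fourier expansion on $(0,1)$ collapses, after pairing the $n$ and $-n$ terms and using the classical identities $\sum_{n\ge 1}\frac{\sin(2\pi n x)}{n} = \pi(1/2 - x)$ and $\sum_{n\ge 1}\frac{\cos(2\pi n x)}{n} = -\ln(2\sin(\pi x))$, into an expression of the form
\[
\frac{F(x)+F(x^-)}{2} = (1-a)x + \frac{1+a}{2} - EZ + \frac{b}{\pi}\ln\bigl(2\sin(\pi x)\bigr).
\]
Because $F$ is bounded in $[0,1]$ while the logarithm diverges to $-\infty$ at both endpoints of $(0,1)$, the coefficient $b$ must vanish. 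Thus $c \in \R$.

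With $c$ real, $(F(x)+F(x^-))/2$ agrees on $(0,1)$ with a continuous affine function $F_c(x) := (1-c)x + (1+c)/2 - EZ$, so $F = F_c$ at every continuity point of $F$. A hypothetical jump at $x_0 \in (0,1)$ would give $F(x_0) > F_c(x_0)$ from the midpoint formula, yet right-continuity of $F$ combined with $F = F_c$ on a dense set just to the right of $x_0$ would force $F(x_0) = F_c(x_0)$, a contradiction. Hence $F$ has no atoms in $(0,1)$ and $F = F_c$ throughout. The boundary values close the proof: $F(0) = p$ gives $p = (1+c)/2 - EZ$ and $F(1^-) = 1$ gives $EZ = (1-c)/2$; solving this pair forces $c = p$, and $F(t) = p + (1-p)t$ for $t \in [0,1]$.

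The main obstacle I expect is step two: extracting the reality of $c$ purely from boundedness of the CDF and the logarithmic divergence of $\sum \cos(2\pi n x)/n$ near the endpoints. Everything else --- transcribing the hypothesis into relations between Fourier coefficients and pinning down two real parameters from two boundary conditions --- is careful bookkeeping; the no-atoms step in paragraph three is short but must be executed precisely to avoid circularity.
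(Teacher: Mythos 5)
Your argument is correct, and it is a genuinely different route from the paper's: the paper merely translates the hypothesis into $Y\bmod{1}\sim nY\bmod{1}$ for $Y=\log_bX$ and then cites Hill's original measure-theoretic proof in \cite{baseinvariance}, whereas you give a complete, self-contained derivation using only the paper's own Fourier machinery (Theorems \ref{minusx} and \ref{coefficients} and identity (\ref{modulosmarts})). Each step checks out: the hypothesis does force $\varphi_Z(2\pi n)=\varphi_Z(2\pi)=c$ for all $n\in\N$; the target measure $p\,\delta_0+(1-p)\,U[0,1)$ does have all nonzero Fourier coefficients equal to $p$; and your key step --- reality of $c$ --- is sound because the identity from Theorem \ref{minusx} holds pointwise at each fixed $x\in(0,1)$ (both $\sum_{n\ge1}\sin(2\pi nx)/n$ and $\sum_{n\ge1}\cos(2\pi nx)/n$ converge there), so letting $x\downarrow 0$ pits the bounded left side against the divergent $\ln(2\sin\pi x)$ term unless its coefficient vanishes. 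The no-atom argument and the two boundary conditions $F(0)=p$, $F(1^-)=1$ then pin down $c=p$ exactly as you say; in fact once $F=F_c$ on $(0,1)$ you no longer even need Theorem \ref{coefficients}. What your approach buys is a proof in the same analytic style as the rest of the paper, at the cost of invoking the classical closed forms of the two conditionally convergent series; Hill's cited proof avoids Fourier analysis but requires his sigma-algebra formalism for significand events. One cosmetic caution: you write $c=a+bi$ while $b$ already denotes the base throughout the theorem, so rename the imaginary part before splicing this in.
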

\begin{proof}
Putting $Y=\log_bX$, the assumption translates into
$$Y\bmod{1}\sim nY\bmod{1}\text{ \ \ for all }n\in\N.$$
See the original \cite{baseinvariance}
or a later version \cite[Section 4.3]{basictheory}.
\end{proof}

In this context, Theodore T.~~Hill discusses
Furstenberg's Conjecture \cite[p.~58]{basictheory},
but he does not remark that if $X$ is assumed to have a density
then it is enough that the condition $S_b(X)\sim S_b(X^n)$
holds for just one $n>1$:

\begin{theorem}\label{furstenberg}
Let $X$ be a real-valued continuous random variable with an arbitrary density.
If $X\bmod{1}\sim nX\bmod{1}$ for some $n\in\N$, $n>1$,
then $X\bmod{1}\sim U[0,1)$.
\end{theorem}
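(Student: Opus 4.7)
The plan is to use characteristic functions together with the Riemann--Lebesgue lemma, in the same spirit as the Fourier-coefficient arguments developed earlier in the paper.

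Write $Z = X \bmod 1$ and let $\varphi(t) = \varphi_X(t) = Ee^{itX}$. By (\ref{modulosmarts}), $\varphi_Z(2\pi k) = \varphi(2\pi k)$ for every $k\in\Z$, so the hypothesis $X\bmod 1 \sim nX\bmod 1$ translates, after applying (\ref{modulosmarts}) twice, into the functional equation
\begin{equation*}
\varphi(2\pi k) \;=\; \varphi(2\pi k n) \qquad \text{for all } k\in\Z.
\end{equation*}
Iterating this equation $j$ times yields $\varphi(2\pi k) = \varphi(2\pi k n^j)$ for every $k\in\Z$ and every $j\in\N$.

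Now I would invoke the assumption that $X$ has a density. The density of $X$ lies in $L^1(\R)$, so by the Riemann--Lebesgue lemma $\varphi(t)\to 0$ as $|t|\to\infty$. Fix any $k\in\Z\setminus\{0\}$; since $n>1$, we have $|2\pi k n^j|\to\infty$ as $j\to\infty$, and therefore
\begin{equation*}
\varphi(2\pi k) \;=\; \lim_{j\to\infty}\varphi(2\pi k n^j) \;=\; 0.
\end{equation*}
Hence $\varphi_Z(2\pi k)=0$ for every nonzero integer $k$, which matches the Fourier coefficients of the uniform distribution $U[0,1)$ computed in (\ref{u01}). Theorem \ref{coefficients} then forces $Z\sim U[0,1)$, which is precisely $X\bmod 1 \sim U[0,1)$.

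The argument is essentially three lines once the earlier machinery is in place; the only nontrivial ingredient is the appeal to Riemann--Lebesgue, which is exactly where the density assumption enters and cannot be dispensed with (for a purely singular $X$, $\varphi$ need not vanish at infinity and the whole scheme collapses---this is precisely the content of Furstenberg's conjecture). So the main conceptual point is not an obstacle in a proof but rather the recognition that decay of $\varphi$ at infinity, combined with the multiplicative orbit $\{kn^j\}_{j\ge 0}$, kills every nonzero Fourier coefficient of $X\bmod 1$ simultaneously, even though only a single value of $n$ is assumed.
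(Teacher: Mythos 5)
Your proof is correct, but it takes a genuinely different route from the paper's own proof of Theorem \ref{furstenberg}. The paper disposes of this theorem in two lines by citing the external result \cite[Theorem 4.17]{basictheory}, which gives $P(n^kX\bmod 1\le s)\to s$ for integer $n$ when $X$ has a density, and combining it with the (tacitly iterated) identity $P(n^kX\bmod 1\le s)=P(X\bmod 1\le s)$. You instead run everything through Fourier coefficients: via (\ref{modulosmarts}) the hypothesis becomes $\varphi(2\pi k)=\varphi(2\pi kn)$ for all $k\in\Z$, which you may legitimately iterate to $\varphi(2\pi k)=\varphi(2\pi kn^j)$ precisely because $kn\in\Z$, and the Riemann--Lebesgue lemma then annihilates $\varphi(2\pi k)$ for $k\ne 0$, so that Theorem \ref{coefficients} together with (\ref{u01}) finishes. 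This is essentially the paper's own method for the \emph{stronger} Theorem \ref{wniosekmocny} (proved via Theorem \ref{indistribution}), so in effect you have reproved the generalization and specialized it to integer $n$; what your version buys is self-containedness (no appeal to \cite[Theorem 4.17]{basictheory}) and an explicit, airtight justification of the iteration step that the paper's proof leaves implicit. One caveat worth noting: your substitution of $kn$ for $k$ is exactly the place where integrality of $n$ is used, so your orbit argument does not extend verbatim to real multipliers $a>1$; that is why the paper's more general statement requires the full convergence result of Theorem \ref{indistribution}. Your closing remark that only the decay of $\varphi$ at infinity is needed is also consistent with the paper's observation that the density assumption can be relaxed to the Rajchman condition (\ref{rajchman}).
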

\begin{proof}
By \cite[Theorem 4.17]{basictheory}, for every $s\in[0,1)$,
$$\lim_{k\to\infty}P\big(n^kX\bmod{1}\le s\big)=s.$$
On the other hand, for any $k\in\N$,
$$P\big(n^kX\bmod{1}\le s\big)=P\big(X\bmod{1}\le s\big).$$
\end{proof}

The assumption that $X$ has a density cannot be altogether
dropped to achieve full generality but it can be substantially relaxed
--- see the remark on page 40 in \cite{basictheory}.
It would be nice to generalize Theorem \ref{furstenberg}
so that $n\in\N$ could be replaced with any $a>1$
to achieve
\begin{equation}\label{aXU}
X\bmod{1}\sim aX\bmod{1}\implies
X\bmod{1}\sim U[0,1).
\end{equation}
Unfortunately,
we cannot hope to generalize the method used in the proof of
\cite[Theorem 4.17]{basictheory} because in the very first line
it requires the fact that $$ax\bmod{1}=a(x\bmod{1})\bmod{1},$$
which is valid only for $a\in\N$.
Fortunately, we have an alternative proof
which also relaxes the assumption
that $X$ has a density.

\begin{theorem}\label{indistribution}
Let $X$ be a real-valued continuous random variable with an arbitrary density.
Then $aX\bmod{1}\to U[0,1)$ in distribution as $a\to\infty$
through all real values.
\end{theorem}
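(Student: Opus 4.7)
The plan is to exploit the Fourier-analytic machinery already developed in the preceding section. The natural approach is to show that, for every $n \in \Z \setminus \{0\}$, the Fourier coefficient $\varphi_{aX \bmod 1}(2\pi n)$ converges to $\varphi_{U[0,1)}(2\pi n) = 0$ (see (\ref{u01})) as $a \to \infty$, and then upgrade this pointwise convergence of Fourier coefficients to convergence in distribution.

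Using (\ref{modulosmarts}), the Fourier coefficients simplify immediately:
$$\varphi_{aX \bmod 1}(2\pi n) \;=\; \varphi_{aX}(2\pi n) \;=\; E e^{2\pi i n a X} \;=\; \varphi_X(2\pi n a).$$
Because $X$ has a density $f \in L^1(\R)$, the Riemann-Lebesgue lemma yields $\varphi_X(t) \to 0$ as $|t| \to \infty$. Consequently, for each fixed $n \neq 0$, one has $\varphi_X(2\pi n a) \to 0$ as $a \to \infty$, which matches the Fourier coefficients of $U[0,1)$.

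To pass from coefficient convergence to distributional convergence, I would identify $[0,1)$ with the unit circle $\mathbb{T}$ via modular arithmetic. Trigonometric polynomials are uniformly dense in $C(\mathbb{T})$ by Stone-Weierstrass, and since the family of probability measures on the compact space $\mathbb{T}$ is automatically tight, pointwise convergence of Fourier coefficients forces weak convergence of the distribution of $aX \bmod 1$ to $U[0,1)$. Because $U[0,1)$ has no atoms in $[0,1)$, this weak convergence coincides with pointwise convergence of the usual cumulative distribution functions on $[0,1)$, which is the standard meaning of convergence in distribution in this setting.

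The main obstacle is nothing more than the Riemann-Lebesgue step itself, which is precisely where the assumption that $X$ has a density is used in an essential way: if $X$ were concentrated on $\Z$, then $aX \bmod 1$ would be a Dirac mass at $0$ for every integer $a$ and would certainly not approach $U[0,1)$. The remaining coefficient-to-distribution passage is a routine adaptation of the techniques employed in Theorems \ref{coefficients} and \ref{mybest}.
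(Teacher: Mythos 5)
Your proposal is correct and follows essentially the same route as the paper: reduce to the Fourier coefficients via (\ref{modulosmarts}), kill them with the Riemann--Lebesgue lemma, and then invoke the equivalence between convergence of Fourier coefficients and convergence in distribution for $[0,1)$-valued variables. The only difference is cosmetic --- the paper cites that last equivalence from the literature (\cite[p.~35]{basictheory}, \cite[Lemma 1]{ross}) while you sketch its proof via Stone--Weierstrass on the circle, which is a legitimate filling-in of the same step.
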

\begin{proof}
Let $f$ be the density of $X$.
Then the characteristic function of $X$ is given by
$\varphi(t)=\int_{-\infty}^\infty e^{itx}f(x)dx$.
By the Riemann–Lebesgue lemma,
\cite[Lemma 3 on p.~513]{feller}, \begin{equation}\label{rajchman}
\lim_{|t|\to\infty}\varphi(t)=0.\end{equation}

Fix $k\in\Z\setminus\{0\}$. By (\ref{modulosmarts}),
$$\varphi_{aX\bmod{1}}(2\pi k)=\varphi_{aX}(2\pi k)=\varphi(2\pi ka),$$
which converges to $0$ as $a\to\infty$.

To finish the proof apply the criterion (cf.~\cite[p.~35]{basictheory},
\cite[Lemma 1]{ross})
that for any random variables $Z_n,Z\colon\Omega\to[0,1)$,
$Z_n\to Z\text{ in distribution as }n\to\infty$
if and only if
$$\lim_{n\to\infty}\varphi_{Z_n}(2\pi k)=\varphi_Z(2\pi k)
\text{ \ \ for all }k\in\Z.$$
See also an analogous proof in \cite{lolbert}.
\end{proof}

Compare Theorem \ref{indistribution} with \cite[Th.~~3]{appliedfourier},
where $a$ is restricted to integers and the density of $X\bmod{1}$
belongs to $L^2[0,1]$.

Also notice that the assumption that $X$ has a density in Theorem
\ref{indistribution} can be weakened to just the condition (\ref{rajchman}),
which is related to the notion of {\em Rajchman probability},
cf.~~\cite[p.~40]{basictheory}.
The following Corollary \ref{powers} and
Theorems \ref{wniosekmocny} and \ref{baseinvchar}
can also be restated to require only this weaker condition.

\begin{corollary}[cf.~\cite{lolbert}]\label{powers}
Let $X\colon\Omega\to(0,\infty)$ be a random variable with a density.
Then for any base $b>1$,
$$S_b(X^a)\to b^{U[0,1)}\textnormal{ in distribution}$$
as $a\to\infty$ through all real values.\end{corollary}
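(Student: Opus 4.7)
The plan is to reduce Corollary \ref{powers} to Theorem \ref{indistribution} by taking logarithms, and then push the resulting convergence through the continuous exponential map.

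First I would set $Y = \log_b X$. Since $X$ is a positive random variable with a density and $x \mapsto \log_b x$ is a $C^1$ diffeomorphism of $(0,\infty)$ onto $\R$, the change-of-variables formula shows $Y$ is a real-valued continuous random variable with a density. This puts $Y$ exactly in the hypothesis of Theorem \ref{indistribution}.

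Next, applying Theorem \ref{indistribution} to $Y$ yields $aY \bmod 1 \to U[0,1)$ in distribution as $a \to \infty$ through all real values. Since the map $u \mapsto b^u$ is continuous on $[0,1)$, the continuous mapping theorem for convergence in distribution gives
$$b^{aY \bmod 1} \longrightarrow b^{U[0,1)} \text{ in distribution as } a \to \infty.$$

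Finally, I would translate back to $S_b(X^a)$. By the defining identity (\ref{significand}), for any positive $z$ we have $S_b(z) = b^{\log_b z - \lfloor \log_b z \rfloor} = b^{(\log_b z) \bmod 1}$. Taking $z = X^a$ gives
$$S_b(X^a) = b^{(a \log_b X) \bmod 1} = b^{aY \bmod 1},$$
so the convergence established above is precisely the desired $S_b(X^a) \to b^{U[0,1)}$ in distribution. There is no real obstacle here; the only point requiring care is verifying that $Y$ inherits a density from $X$, so that Theorem \ref{indistribution} applies, and observing that the Benford-style reformulation is just the significand identity rewritten exponentially.
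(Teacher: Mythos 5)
Your proposal is correct and follows the same route as the paper, which proves the corollary in one line by applying Theorem \ref{indistribution} to $Y=\log_b X$. You have merely filled in the routine details (the change of variables giving $Y$ a density, the continuous mapping theorem for $u\mapsto b^u$, and the identity $S_b(X^a)=b^{(a\log_b X)\bmod 1}$), all of which are sound.
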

\begin{proof}
It follows from Theorem \ref{indistribution},
by considering $Y=\log_bX$.
\end{proof}

\begin{theorem}\label{wniosekmocny}
Let $X$ be a real-valued continuous random variable with a density.
If $aX\bmod{1}\sim X\bmod{1}$ for some $a\in(1,\infty)$
then $X\bmod{1}\sim U[0,1)$.
\end{theorem}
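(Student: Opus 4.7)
The plan is to leverage Theorem \ref{indistribution} by iterating the hypothesis. First, by (\ref{modulosmarts}), the assumption $aX\bmod1\sim X\bmod1$ is equivalent to the functional equation
\[
\varphi_X(2\pi n) \;=\; \varphi_X(2\pi na),\qquad n\in\Z,
\]
and by Theorem \ref{coefficients} the desired conclusion $X\bmod1\sim U[0,1)$ is equivalent to $\varphi_X(2\pi n)=0$ for every $n\in\Z\setminus\{0\}$.

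The core step is to lift the invariance from $a$ to every power $a^k$, that is, to establish $a^kX\bmod1\sim X\bmod1$ for every $k\in\N$. Once this is in hand, since $a>1$ the sequence $a^k$ tends to infinity, so Theorem \ref{indistribution} (which applies because $X$ has a density) gives $a^kX\bmod1\to U[0,1)$ in distribution. On the other hand the distribution of $a^kX\bmod1$ is, along the sequence, identically the distribution of $X\bmod1$, which forces $X\bmod1\sim U[0,1)$. In Fourier terms, the iteration amounts to $\varphi_X(2\pi n)=\varphi_X(2\pi na^k)$ for all $n\in\Z$ and $k\in\N$, and the Riemann--Lebesgue lemma sends the right-hand side to $0$ as $k\to\infty$, pinning down $\varphi_X(2\pi n)=0$ for $n\neq0$.

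The main obstacle is precisely this iteration step. In the integer case (Theorem \ref{furstenberg}) the iteration is trivial because the algebraic identity $a(Y\bmod1)\bmod1=aY\bmod1$ makes the hypothesis propagate automatically from $X$ to $aX, a^2X,\ldots$. For a general real $a>1$ this identity breaks, and one cannot chain $\varphi_X(2\pi n)=\varphi_X(2\pi na)$ into $\varphi_X(2\pi na)=\varphi_X(2\pi na^2)$ by a naive second application of the hypothesis, since the intermediate argument $na$ is typically not an integer while the functional equation is given only at integer indices. Overcoming this requires working with the full integer-indexed family of equations at once, and marshalling continuity and Riemann--Lebesgue decay of $\varphi_X$ on all of $\R$ to produce the needed chain of equalities from $\varphi_X(2\pi n)$ out to arbitrarily large arguments at which $\varphi_X$ must vanish. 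That analytic chaining is where the proof concentrates its real work.
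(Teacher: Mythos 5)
You have correctly located the crux --- promoting $aX\bmod1\sim X\bmod1$ to $a^kX\bmod1\sim X\bmod1$ for all $k\in\N$ so that Theorem \ref{indistribution} can be applied along the sequence $a^k\to\infty$ --- and you are right that this does not follow from re-applying the hypothesis, since the functional equation $\varphi_X(2\pi n)=\varphi_X(2\pi na)$ is available only at integer indices while the second step would need it at the non-integer points $na$. But your proposal stops exactly there: the promised ``analytic chaining'' via continuity and Riemann--Lebesgue decay is never carried out, so what you have is a strategy with an acknowledged hole, not a proof. (For comparison, the paper's own proof takes the identical route and simply asserts ``$a^nX\bmod1\sim X\bmod1$ for all $n\in\N$'' with no justification; you have in effect put your finger on a gap in the source itself.)

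The more serious news is that the gap cannot be filled: the iteration, and indeed the theorem as stated, fails for non-integer $a$. Take $a=\tfrac32$. Let $\varphi_0(t)=(1-|t|/\delta)^+$ with $0<\delta<1$ be the characteristic function of the density $f_0(x)=\frac{1-\cos\delta x}{\pi\delta x^2}$ (as in the proof of Theorem \ref{intervalspectrum}), and set
$$g(x)=f_0(x)\Big(1+\tfrac12\cos(2\pi x)+\tfrac12\cos(3\pi x)\Big).$$
Then $g\ge0$ because $1-\tfrac12-\tfrac12=0$, and $\int g=1$ because $\varphi_0(2\pi)=\varphi_0(3\pi)=0$; its characteristic function is
$$\varphi(t)=\varphi_0(t)+\tfrac14\big(\varphi_0(t-2\pi)+\varphi_0(t+2\pi)\big)+\tfrac14\big(\varphi_0(t-3\pi)+\varphi_0(t+3\pi)\big),$$
supported on $\delta$-neighbourhoods of $0,\pm2\pi,\pm3\pi$. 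Consequently $\varphi(2\pi n)=\varphi(3\pi n)$ for every $n\in\Z$ (both sequences read $1,\tfrac14,0,0,\dots$ as $|n|=0,1,2,\dots$), so by (\ref{modulosmarts}) and Theorem \ref{coefficients} the variable $X$ with density $g$ satisfies $\tfrac32X\bmod1\sim X\bmod1$; yet $\varphi(2\pi)=\tfrac14\neq0$, so $X\bmod1\not\sim U[0,1)$. One also checks $\varphi(2\pi\cdot\tfrac94)=0\neq\tfrac14=\varphi(2\pi)$, so already $a^2X\bmod1\not\sim X\bmod1$: the iteration genuinely breaks at the first step. The sound results in this circle are Theorem \ref{furstenberg}, where $a\in\N$ makes $a(x\bmod1)\bmod1=ax\bmod1$ and hence the iteration legitimate, and Theorem \ref{scalesummary}, which concerns translation rather than dilation modulo $1$; no amount of chaining of continuity and Riemann--Lebesgue decay can rescue the general statement (note the counterexample's $\varphi$ is continuous and compactly supported), and the defect propagates to Theorem \ref{baseinvchar}, which is derived from Theorem \ref{wniosekmocny}.
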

\begin{proof}
Since
$a^n\bmod{1}\sim X\bmod{1}\text{ for all }n\in\N,$
it is enough apply Theorem \ref{indistribution}
to show that $a^nX\bmod{1}$ converges to $U[0,1)$
in distribution as $n\to\infty$.
\end{proof}

The case $a\in(0,1)$ can be included in Theorem \ref{wniosekmocny}
by considering $1/a>1$.
\\\\
We are ready for a kind of base-invariance characterization
of Benford's law:
if a positive continuous random variable has the same significand
distribution for two distinct bases
then it must be Benford for both bases.

\begin{theorem}\label{baseinvchar}
Let $1<b<\beta$.
Suppose that the random variable $X\colon\Omega\to(0,\infty)$
has a density and satisfies $$P(S_b(X)\le b^t)=
P(S_\beta(X)\le\beta^t)\textnormal{ \ \ for all }t\in[0,1].$$
Then $X$ is Benford for both bases $b$ and $\beta$, that is
$$P(S_b(X)\le b^t)=P(S_\beta(X)\le\beta^t)=
t\textnormal{ \ \ for all }t\in[0,1].$$
\end{theorem}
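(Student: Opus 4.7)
The plan is to translate the hypothesis into a scaling relation modulo 1 and then invoke Theorem \ref{wniosekmocny}.

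First I would set $Y=\log_b X$ and $Z=\log_\beta X$. Using the change-of-base identity, $Z=cY$ where $c=\log_\beta b = \ln b/\ln\beta \in (0,1)$. The hypothesis $P(S_b(X)\le b^t)=P(S_\beta(X)\le \beta^t)$ for all $t\in[0,1]$ is exactly the statement that the cumulative distribution functions of $Y\bmod 1$ and $Z\bmod 1$ agree on $[0,1]$, i.e.
\[
Y\bmod 1 \sim Z\bmod 1.
\]
Note that $Z$ has a density because $X$ does and $x\mapsto \log_\beta x$ is a $C^1$-diffeomorphism of $(0,\infty)$ onto $\R$.

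Next I would rewrite the relation as a multiplicative fixed-point condition for $Z$ alone. Setting $a=1/c=\log_b\beta>1$, we have $Y=aZ$, so the hypothesis reads
\[
aZ\bmod 1 \sim Z\bmod 1.
\]
Now Theorem \ref{wniosekmocny} applies directly to $Z$ (which has a density and satisfies a scaling relation with some $a>1$), and yields $Z\bmod 1 \sim U[0,1)$. In terms of $X$ this says $\log_\beta X\bmod 1\sim U[0,1)$, so $X$ is $\beta$-Benford.

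Finally, combining with the hypothesis $Y\bmod 1 \sim Z\bmod 1$ gives $\log_b X\bmod 1 \sim U[0,1)$, so $X$ is $b$-Benford as well, and the desired identities $P(S_b(X)\le b^t)=P(S_\beta(X)\le \beta^t)=t$ follow. There is no real obstacle in the argument; the only substantive content is the recognition that the two-base equality of significand distributions is precisely the type of multiplicative invariance already handled by the Riemann--Lebesgue / characteristic-function argument underlying Theorem \ref{wniosekmocny}. The continuity (density) hypothesis on $X$ is used in exactly one place: to ensure $Z=\log_\beta X$ has a density so that Theorem \ref{wniosekmocny} is applicable.
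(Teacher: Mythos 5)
Your proposal is correct and follows essentially the same route as the paper: translate the two-base hypothesis into $aZ\bmod 1\sim Z\bmod 1$ with $Z=\log_\beta X$ and $a=\log_b\beta>1$, apply Theorem \ref{wniosekmocny} to conclude $Z\bmod 1\sim U[0,1)$, and then transfer the conclusion back to both bases. The only difference is cosmetic (your extra variable $c=1/a$); nothing of substance diverges from the paper's argument.
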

\begin{proof}
The assumption translates into
$$\log_bX\bmod{1}\sim\log_\beta X\bmod{1}.$$
Let $Y=\log_\beta X$ and $a=\log_b\beta>1$.
Then $\log_b X=aY$ and thus
$$aY\bmod{1}\sim Y\bmod{1}.$$
By Theorem \ref{wniosekmocny},
$$aY\bmod{1}\sim Y\bmod{1}\sim U[0,1).$$
Therefore, using (\ref{bZ}),
$$X=\beta^Y\text{ is }\beta\text{-Benford}$$
and $$X=b^{aY}\text{ is }b\text{-Benford}.$$
\end{proof}

Theorem \ref{baseinvchar} is very nice but it is essential
to answer the question whether there exists a random variable
satisfying its assumption apart from the case when the greater base
is an integral multiple of the smaller base, cf.~Theorem \ref{hut6tfdfecbh}.

In fact, using the same ideas both Peter Schatte \cite{schatte1981} and
James V.~Whittaker \cite[p.~267]{scaleinvariant}
constructed a random variable
that satisfies Benford's law simultaneously for all bases
in the interval $(1,b]$ for any $b>1$.

We are going to review Whittaker's approach here because
it can be seriously improved to study in full generality
the set of all bases for which a given random variable is Benford.

The following preparations are needed.
For $z>0$ and $x\in\R$ let us define
$$x\bmod{z}=x-\max\{nz\colon n\in\Z\wedge nz\le x\}.$$
Then $$x\bmod{z}=x-z\Big\lfloor\frac{x}{z}\Big\rfloor\in[0,z)$$
and
$$X/z\bmod{1}\sim U[0,1)\iff X\bmod{z}\sim U[0,z).$$

\begin{theorem}[cf.~Th.~4 in \cite{scaleinvariant}]
\label{modchar}
Let $X\colon\Omega\to\R$ be a random variable. Let $z>0$.
Then \begin{equation}\label{modchar1}
X\bmod{z}\sim U[0,z)\end{equation}
if and only if
\begin{equation}\label{modchar2}
\varphi_X\Big(\frac{2\pi n}{z}\Big)=0
\textnormal{ \ \ for all }n\in\N.\end{equation}
\end{theorem}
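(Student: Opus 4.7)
The plan is to reduce the statement to the case $z=1$ via the substitution $Y=X/z$. By the preparations stated just before the theorem, $X \bmod z \sim U[0,z)$ is equivalent to $Y \bmod 1 \sim U[0,1)$. On the Fourier side, $\varphi_Y(2\pi n) = \varphi_{X/z}(2\pi n) = \varphi_X(2\pi n/z)$, so the condition $\varphi_X(2\pi n/z) = 0$ for all $n \in \N$ becomes $\varphi_Y(2\pi n) = 0$ for all $n \in \N$. It therefore suffices to prove the equivalence in the case $z=1$.

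Set $W = Y \bmod 1$, a random variable taking values in $[0,1)$. I would apply (\ref{modulosmarts}) to get $\varphi_W(2\pi n) = \varphi_Y(2\pi n)$ for every $n \in \Z$, so the Fourier hypothesis is equivalently stated as $\varphi_W(2\pi n) = 0$ for all $n \in \N$.

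For the forward implication (\ref{modchar1}) $\Rightarrow$ (\ref{modchar2}), if $W \sim U[0,1)$ then (\ref{u01}) directly gives $\varphi_W(2\pi n) = 0$ for every nonzero $n$. For the converse, observe that the uniform distribution $U[0,1)$ likewise satisfies $\varphi_{U[0,1)}(2\pi n) = 0$ for all $n \in \N$ by (\ref{u01}); since both $W$ and a $U[0,1)$-variable take values in $[0,1)$ and have matching Fourier coefficients at every $2\pi n$ with $n \in \N$, Theorem \ref{coefficients} yields $W \sim U[0,1)$, which is (\ref{modchar1}).

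I do not expect any real obstacle: the argument is a transparent rescaling combined with a single application of the uniqueness Theorem \ref{coefficients}, with all relevant Fourier machinery already established. The only small subtlety worth naming is that the hypothesis is stated only for $n \in \N$ rather than for all $n \in \Z \setminus \{0\}$, but this is harmless because Theorem \ref{coefficients} itself is formulated with the same restriction (and, even if not, the Hermitian symmetry $\varphi(-t) = \overline{\varphi(t)}$ would supply the missing indices).
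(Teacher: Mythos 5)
Your proposal is correct and follows essentially the same route as the paper's own proof: rescale by $z$ using $\varphi_{X/z}(t)=\varphi_X(t/z)$, pass to the variable modulo $1$ via (\ref{modulosmarts}), and then invoke (\ref{u01}) together with the uniqueness statement of Theorem \ref{coefficients} to characterize $U[0,1)$ by the vanishing of its Fourier coefficients. No gaps; your remark that the restriction to $n\in\N$ is harmless by Hermitian symmetry matches how Theorem \ref{coefficients} is set up in the paper.
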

\begin{proof}
Note that if $U\sim U[0,1)$
then $\varphi_U(2\pi n)=0$ for all $n\in\N$
and recall that, by Theorem \ref{coefficients},
the distribution of any random variable $Y\colon\Omega\to[0,1)$
is uniquely determined by its Fourier coefficients $\varphi_Y(2\pi n)$,
$n\in\N$.

Notice that $\varphi_{X/z}(t)=\varphi_X(t/z)$ for all $t\in\R$.
Thus the equivalnce (\ref{modchar1})$\iff$(\ref{modchar2})
is restated as
$$X/z\bmod{1}\sim U[0,1)\ \ \iff\ \ \varphi_{X/z}(2\pi n)=0
\text{ for all }n\in\N,$$
which, by (\ref{modulosmarts}), turns into
$$X/z\bmod{1}\sim U[0,1)\ \ \iff\ \ \varphi_{X/z\bmod{1}}(2\pi n)=0
\text{ for all }n\in\N,$$
which is the just-mentioned characterization of the uniform
distribution $U[0,1)$ in terms of its Fourier coefficients.
\end{proof}

A weaker version of Theorem \ref{modchar}
is given by James V.~Whittaker \cite[Th.~4]{scaleinvariant},
where (\ref{modchar1})$\implies$(\ref{modchar2})
is proved in a different way and the converse
(\ref{modchar2})$\implies$(\ref{modchar1})
is also proved in a different way with the additional assumption
that $X$ has a density with bounded variation.
In fact, all his calculations are made redundant
by appealing to the fact that
$\varphi_X(2\pi n)=\varphi_{X\bmod{1}}(2\pi n)$,
see (\ref{modulosmarts}).

As a side remark notice that Theorem \ref{modchar}
has an interesting corollary that enriches
\cite[Lemma 4(b) on p.~501]{feller}
by adding the information that $\varphi(t)\not=0$ for all $t\in\R$,
because otherwise --- $|\varphi|$ being periodic --- the
discrete variable under consideration would be uniformly distributed
modulo $z$ for some $z>0$.

Let us turn Theorem \ref{modchar} into a tool
with which to characterize the set of all bases
for which a given random variable is Benford.
This criterion was given by Whittaker in 1983
and then by Lolbert in 2008, \cite{lolbert}.
In our version $X$ is not assumed to have a density,
so it is fully general.

\begin{theorem}[Whittaker's Criterion]\label{charbases}
Let $X\colon\Omega\to(0,\infty)$ be a random variable.
Let $b>1$. Then
\begin{equation}\label{charbases1}
X\textnormal{ is }b\textnormal{-Benford}\end{equation}
if and only if\begin{equation}\label{charbases2}
\varphi_{\ln X}\Big(\frac{2\pi n}{\ln b}\Big)=0
\textnormal{ \ \ for all }n\in\N.\end{equation}
\end{theorem}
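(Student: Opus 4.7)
The plan is to reduce Theorem~\ref{charbases} to Theorem~\ref{modchar} by means of a change of variables, since essentially all the analytic work has already been done. I would first rewrite the Benford condition in a form amenable to Theorem~\ref{modchar}. By definition, $X$ being $b$-Benford means $\log_b X \bmod 1 \sim U[0,1)$. Since $\log_b X = (\ln X)/(\ln b)$, this is the same as $(\ln X)/(\ln b) \bmod 1 \sim U[0,1)$.

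Next I would invoke the identity stated immediately before Theorem~\ref{modchar},
$$Y/z \bmod 1 \sim U[0,1) \iff Y \bmod z \sim U[0,z),$$
applied with $Y = \ln X$ and $z = \ln b > 0$. This shows that $X$ is $b$-Benford precisely when $\ln X \bmod \ln b \sim U[0,\ln b)$.

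Now I would apply Theorem~\ref{modchar} directly to the random variable $Y = \ln X$ with $z = \ln b$. The theorem states that $Y \bmod z \sim U[0,z)$ is equivalent to $\varphi_Y(2\pi n/z) = 0$ for all $n \in \N$, which in our notation is exactly
$$\varphi_{\ln X}\Bigl(\frac{2\pi n}{\ln b}\Bigr) = 0 \quad \text{for all } n \in \N.$$
Chaining the two equivalences yields the theorem.

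There is no real obstacle here: all the Fourier-analytic content is packaged in Theorem~\ref{modchar}, which itself rests on Theorem~\ref{coefficients} (uniqueness of distributions on $[0,1)$ via their Fourier coefficients) and the elementary identity~(\ref{modulosmarts}). The only thing to watch is that $\ln X$ is well-defined and real-valued (guaranteed by $X>0$) and that $\ln b > 0$ (guaranteed by $b>1$), so the substitution $z = \ln b$ is legitimate. The proof is therefore just the chain of equivalences above, with no calculation beyond recognizing $\log_b = \ln/\ln b$.
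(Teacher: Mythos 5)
Your proposal is correct and follows essentially the same route as the paper: the paper's proof is precisely the chain $X$ is $b$-Benford $\iff \log_b X \bmod 1 \sim U[0,1) \iff \ln X \bmod \ln b \sim U[0,\ln b) \iff$ (\ref{charbases2}), with the last step by Theorem~\ref{modchar}. Nothing is missing.
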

\begin{proof}
Since $\log_bX=\ln X/\ln b$, $$(\ref{charbases1})
\iff\log_bX\bmod{1}\sim U[0,1)\iff
\ln X\bmod{\ln b}\sim U[0,\ln b)\iff(\ref{charbases2}),$$
the last equivalance by Theorem \ref{modchar}.
\end{proof}

As a side remark, let us record the following nice corollary to
Theorem \ref{charbases},
which is an alternative proof of (\ref{spectrum}).

\begin{theorem}\label{boundedspectrum}
Let $X\colon\Omega\to(0,\infty)$ be a random variable.
Then $$\{b\in(1,\infty)\colon X\textnormal{ is }b
\textnormal{-Benford}\}$$
is bounded from above.
\end{theorem}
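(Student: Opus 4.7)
The plan is to use Whittaker's Criterion (Theorem \ref{charbases}) to translate the $b$-Benford property into a condition on zeros of $\varphi_{\ln X}$ and then exploit the continuity of the characteristic function at $0$.

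First I would recall that $\varphi_{\ln X}$ is a characteristic function, hence continuous on $\R$ with $\varphi_{\ln X}(0)=1$. Therefore there exists $\delta>0$ such that $\varphi_{\ln X}(t)\neq 0$ for all $t\in(-\delta,\delta)$. (Concretely, one can pick $\delta$ so that $|\varphi_{\ln X}(t)|>1/2$ on this neighborhood.)

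Next I would apply Theorem \ref{charbases}: if $X$ is $b$-Benford, then in particular the $n=1$ case gives
\[
\varphi_{\ln X}\!\Big(\frac{2\pi}{\ln b}\Big)=0.
\]
Setting $b_0=e^{2\pi/\delta}$, any $b>b_0$ yields $0<2\pi/\ln b<\delta$, which forces $\varphi_{\ln X}(2\pi/\ln b)\neq 0$, contradicting the Benford property. Hence the spectrum is contained in $(1,b_0]$ and is bounded above.

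There is no real obstacle here once Theorem \ref{charbases} is in hand; the only subtlety is remembering that a characteristic function is continuous and equals $1$ at the origin, so it cannot vanish arbitrarily close to $0$, which is precisely what would be required for arbitrarily large Benford bases.
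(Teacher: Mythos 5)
Your proof is correct and follows essentially the same route as the paper: both invoke Whittaker's Criterion (Theorem \ref{charbases}) with $n=1$ and use the continuity of $\varphi_{\ln X}$ together with $\varphi_{\ln X}(0)=1$ to find a zero-free neighborhood of the origin, which bounds $2\pi/\ln b$ away from $0$ and hence bounds $b$ above. No gaps.
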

\begin{proof}
Let $\varphi$ be the characteristic function of $Z=\ln X$.
Since $\varphi(0)=1$ and $\varphi$ is continuous,
there is an $r>0$ such that $\varphi(t)\not=0$ for all $t\in(0,r)$.
If $X$ is $b$-Benford for some $b>1$,
then by Theorem \ref{charbases}, $\varphi(2\pi/\ln b)=0$.
Hence $2\pi/\ln b>r$ and finally $b<\exp(2\pi/r).$
\end{proof}

The following theorem was proven by Peter Schatte in 1981
\cite{schatte1981} and then in 1983 by James V.~Whittaker
\cite{scaleinvariant}. Although the core idea is the same,
they each have a different approach, which follows from
their earlier results, so it is likely that they have arrived
at this solution independently.
Whittaker's way involves the intermediate step of
Theorem \ref{charbases}, which is important in its own right.
Schatte refers directly to Theorem \ref{minusx},
which is exactly the same method but without explicitly
stating what I call Whittaker's Criterion, Theorem \ref{charbases}.
Moreover, Schatte gives more details how to construct a characteristic function
with compact support while Whittaker merely cites Feller's textbook.

\begin{theorem}[Peter Schatte, James V.~Whittaker]\label{intervalspectrum}
For any $\beta>1$ there is a random variable that is Benford
simultaneously for all bases in $(1,\beta]$.
\end{theorem}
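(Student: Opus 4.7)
The plan is to reduce the statement to a question about characteristic functions via Whittaker's Criterion (Theorem \ref{charbases}), and then invoke the existence of a characteristic function with prescribed compact support.

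First I would rewrite the target condition. By Theorem \ref{charbases}, the random variable $X=e^Z$ is simultaneously $b$-Benford for every $b\in(1,\beta]$ if and only if
$$\varphi_Z\!\Big(\frac{2\pi n}{\ln b}\Big)=0\quad\text{for every }n\in\N\text{ and every }b\in(1,\beta].$$
As $b$ ranges over $(1,\beta]$, the quantity $\ln b$ sweeps out $(0,\ln\beta]$, so $2\pi/\ln b$ sweeps out $[2\pi/\ln\beta,\infty)$, and taking integer multiples only enlarges the range within the same half-line. Hence the required vanishing set is exactly the half-line $\{t\ge 2\pi/\ln\beta\}$, and by the conjugation symmetry $\varphi_Z(-t)=\overline{\varphi_Z(t)}$ it is also the symmetric set $|t|\ge 2\pi/\ln\beta$. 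In other words, it suffices to produce a real-valued random variable $Z$ whose characteristic function is supported in the interval $[-c,c]$ with $c:=2\pi/\ln\beta$.

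Next I would supply such a $Z$. This is where any known construction of a compactly supported characteristic function fits in. The cleanest is Pólya's sufficient condition: any even, continuous, convex, nonincreasing function on $[0,\infty)$ that equals $1$ at $0$ and vanishes outside a bounded interval is a characteristic function. Applied to the tent
$$\varphi(t)=\max\Big(1-\frac{|t|}{c},\,0\Big),$$
one obtains a characteristic function that vanishes identically for $|t|\ge c$; the corresponding distribution has the explicit Pólya density $f(x)=\frac{1-\cos(cx)}{\pi c x^{2}}$. Take $Z$ to have this distribution and put $X=e^{Z}$, which is a positive random variable.

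Finally, I would verify the conclusion. Since $\varphi_{\ln X}=\varphi_Z$ vanishes on $\{|t|\ge c\}$ and every point of the form $2\pi n/\ln b$ with $n\in\N$ and $b\in(1,\beta]$ lies in $[c,\infty)$, the Whittaker criterion gives that $X$ is $b$-Benford for each $b\in(1,\beta]$. The main obstacle here is not the Benford analysis at all, which is now mechanical, but rather the existence of a characteristic function with compact support; once that classical fact is granted, the rest is a short computation of the exceptional set.
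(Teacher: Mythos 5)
Your proposal is correct and takes essentially the same route as the paper: both reduce the statement via Whittaker's Criterion (Theorem \ref{charbases}) to exhibiting a characteristic function that vanishes on $[2\pi/\ln\beta,\infty)$, and both realize it by the triangular characteristic function with the P\'olya/Feller density $f(x)=\frac{1-\cos(cx)}{\pi c x^{2}}$, $c=2\pi/\ln\beta$. The only cosmetic difference is that you justify the construction by P\'olya's convexity criterion while the paper cites Feller's explicit example directly.
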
\begin{proof}
By \cite[p.~503]{feller},
for each $a>0$, the function $f\colon\R\to[0,\infty)$ given by
$$f(x)=\frac{1-\cos ax}{\pi ax^2}$$
is the probability density function of a random variable
$Z\colon\Omega\to\R$ whose characteristic function is given by
$$\varphi(t)=\begin{cases}1-\frac{|t|}{a}&\colon|t|\le a\\
0&\colon|t|\ge a.\end{cases}$$

Let $a=2\pi/\ln b$. Then
$\varphi_Z(t)=0$ for all $t\ge 2\pi/\ln\beta$.
Thus $$\varphi_Z(2\pi n/\ln b)=0
\text{ \ \ for all }b\in(1,\beta],n\in\N.$$
By Theorem \ref{charbases}, the random variable $X=e^Z$
is Benford for all $b\in(1,\beta]$.
\end{proof}

This super-Benford example is a mathematician's answer
to a theoretical problem. But the canonical examples of Benford random variables, which might be expected to approximate real-world phenomena,
are of the form $$X=b^{U[c,d)}$$ with densities
$$f(x)=\frac{1}{(d-c)\ln b}\mathds{1}_{[b^c,b^d)}(x)
\frac{1}{x}.$$
Each of them has the largest base $\beta$ for which it is Benford
and all its other Benford bases are of the form $\sqrt[n]{\beta}$.
However, it is possible that choosing an incompatible base
for the significand analysis of a random variable that is Benford 
for its native base may produce a very close approximation to being Benford
for the incompatible base.
These ideas are summarized in Theorem \ref{bhcbvtgtgve}.

\begin{lemma}\label{g5r5fef25ft}
Let $X\sim U[c,d)$ with $c<d$. Then
$X\bmod{1}\sim U[0,1)\iff d-c\in\N$ and
$$\sup_{A\in\borel([0,1))}\big|P(X\bmod{1}\in A)
-\lambda(A)\big|\le\frac{1}{d-c},$$
where $\lambda$ is the Lebesgue measure
on the Borel subsets of $[0,1)$, $\borel([0,1))$.
\end{lemma}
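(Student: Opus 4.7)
The plan is to compute explicitly the density of $X \bmod 1$ as a sum of integer translates of the density of $X$, and to read off both claims from its simple two-step structure. First I would observe that the distribution of $X \bmod 1$ is invariant under translating $X$ by integers, so there is no loss of generality in assuming $c \in [0,1)$. Set $L = d - c$, $n = \lfloor L \rfloor$, and $r = L - n \in [0,1)$. Since $X$ has density $\tfrac{1}{L}\mathds{1}_{[c,d)}$, the density of $X \bmod 1$ at $y \in [0,1)$ is $g(y) = \tfrac{1}{L}\,\#\{k \in \Z : y+k \in [c,d)\}$, i.e.\ $\tfrac{1}{L}$ times the number of integers in the interval $[c-y,\,d-y)$ of length $L = n+r$.

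This count is always either $n$ or $n+1$. Let $B = \{y \in [0,1) : g(y) = (n+1)/L\}$. The key identity is Fubini applied to the counting function: $\int_0^1 \#\{k : y+k \in [c,d)\}\,dy = \lambda([c,d)) = L$. Combined with the two-value structure, this gives $n(1-\lambda(B)) + (n+1)\lambda(B) = L$, so $\lambda(B) = r$. Hence $g$ equals $(n+1)/L$ on a set of measure $r$ and $n/L$ on a set of measure $1-r$.

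For the equivalence, I would then argue that $X \bmod 1 \sim U[0,1)$ iff $g \equiv 1$ almost everywhere. If $r > 0$ the density takes two distinct values on positive-measure sets and cannot be constantly $1$; if $r = 0$ then $g \equiv n/L$, which equals $1$ precisely when $L = n \in \N$. Together these give $X \bmod 1 \sim U[0,1) \iff d - c \in \N$.

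For the sup-bound, I would write $g - 1 = \tfrac{1}{L}\bigl((1-r)\mathds{1}_B - r\,\mathds{1}_{[0,1)\setminus B}\bigr)$, so that for any Borel $A \subseteq [0,1)$, setting $\beta = \lambda(A \cap B)$ and $\gamma = \lambda(A \setminus B)$, one obtains $P(X \bmod 1 \in A) - \lambda(A) = \tfrac{1}{L}\bigl((1-r)\beta - r\gamma\bigr)$ with $\beta \in [0,r]$ and $\gamma \in [0,1-r]$. A one-line optimization over this rectangle shows that the extrema are attained at $(\beta,\gamma)=(r,0)$ and $(0,1-r)$, both giving absolute value $r(1-r)$; since $r(1-r) \leq 1$ we recover the stated bound $\tfrac{1}{d-c}$. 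The only nonroutine step is the Fubini-based identification $\lambda(B) = r$; the rest is bookkeeping on the two-step density.
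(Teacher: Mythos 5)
Your proof is correct, and it is a sharpened version of the same underlying counting idea that the paper uses. The paper's proof is a two-line sandwich: with $n=\lfloor d-c\rfloor$ it asserts $\frac{n\lambda(A)}{d-c}\le P(X\bmod 1\in A)\le\frac{(n+1)\lambda(A)}{d-c}$ because $A$ "fits exactly $n$ times modulo 1" inside $[c,d)$, and reads the bound off from that; the equivalence with $d-c\in\N$ is left essentially implicit. You instead compute the density of $X\bmod 1$ exactly as a two-valued step function, use Fubini to pin down that the over-covered set $B$ has measure exactly $r=d-c-n$, and then optimize $\frac{1}{L}\bigl((1-r)\beta-r\gamma\bigr)$ over the rectangle $[0,r]\times[0,1-r]$. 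This buys you two things the paper's argument does not deliver: a clean proof of both directions of the iff (the density is a.e.\ constant precisely when $r=0$), and the strictly sharper bound $\frac{r(1-r)}{d-c}\le\frac{1}{4(d-c)}$, whereas the paper's sandwich only yields $\frac{\max(r,1-r)}{d-c}\le\frac{1}{d-c}$. The only step requiring care, the identification $\lambda(B)=r$, is handled correctly via $\int_0^1\#\{k:y+k\in[c,d)\}\,dy=d-c$. The paper's route is shorter; yours is more informative.
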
\begin{proof}Let $n=\lfloor d-c\rfloor$.
Then $$\frac{n\lambda(A)}{d-c}\le P(X\bmod{1}\in A)\le\frac{(n+1)\lambda(A)}{d-c}$$
because the set $A\subset[0,1)$ fits exactly $n$ times modulo 1 inside the interval $[c,d)$.
\end{proof}

\begin{theorem}\label{bhcbvtgtgve}
Let $X=\beta^{U[c,d)}$ with $1<\beta$ and $c<d$.
Then
$$\{b\in(1,\infty)\colon X\textnormal{ is }b
\textnormal{-Benford}\}=\{\beta^{\frac{d-c}{n}}\colon n\in\N\}$$
and for all $b>1$,
$$\sup_{A\in\borel([0,1))}
\big|P(\log_bX\bmod{1}\in A)-\lambda(A)\big|\le
\frac{1}{(d-c)\log_b\beta}.$$
\end{theorem}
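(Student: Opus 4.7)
The plan is to reduce everything to Lemma \ref{g5r5fef25ft} applied to a single uniform random variable on a suitable interval.

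First I would compute the distribution of $\log_b X$ in closed form. Writing $U\sim U[c,d)$ so that $X=\beta^U$, I get
$$\log_bX=\frac{\ln X}{\ln b}=\frac{U\ln\beta}{\ln b}=U\log_b\beta.$$
Since $b,\beta>1$ we have $\log_b\beta>0$, so multiplying $U$ by this positive constant yields $\log_bX\sim U[c\log_b\beta,\,d\log_b\beta)$, a uniform distribution on an interval of length $L:=(d-c)\log_b\beta>0$.

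Next I would apply the characterization half of Lemma \ref{g5r5fef25ft} to the variable $\log_b X$. That lemma says a uniform variable on an interval of length $L$ is uniform modulo $1$ if and only if $L\in\N$. Thus $X$ is $b$-Benford (equivalently, $\log_bX\bmod{1}\sim U[0,1)$) if and only if $(d-c)\log_b\beta\in\N$. Solving $(d-c)\log_b\beta=n$ for $b$ with $n\in\N$ gives $\log_b\beta=n/(d-c)$, i.e.\ $\beta=b^{n/(d-c)}$, i.e.\ $b=\beta^{(d-c)/n}$. As $n$ ranges over $\N$ and $(d-c)/n>0$, each such $b$ lies in $(1,\infty)$, giving exactly the set $\{\beta^{(d-c)/n}\colon n\in\N\}$.

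For the quantitative statement, I would apply the supremum inequality of Lemma \ref{g5r5fef25ft} directly to $\log_b X\sim U[c\log_b\beta,\,d\log_b\beta)$: the length of that interval is $(d-c)\log_b\beta$, so the lemma yields
$$\sup_{A\in\borel([0,1))}\big|P(\log_bX\bmod{1}\in A)-\lambda(A)\big|\le\frac{1}{(d-c)\log_b\beta}.$$
There is no real obstacle here; the only thing to double-check is that Lemma \ref{g5r5fef25ft} is being applied in a setting where its hypotheses hold (a uniform variable on an interval of strictly positive length, which is guaranteed by $c<d$ and $\log_b\beta>0$) and that the bound remains meaningful even when $L<1$, in which case it is numerically weak but still correct as an inequality. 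Together, the two applications of the lemma give both claims of the theorem.
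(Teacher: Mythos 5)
Your proposal is correct and is essentially identical to the paper's proof, which consists of the single line ``Apply Lemma \ref{g5r5fef25ft} to $\log_bX=U[c\log_b\beta,d\log_b\beta)$''; you have simply spelled out the details (the positivity of $\log_b\beta$, the solving of $(d-c)\log_b\beta=n$ for $b$, and the substitution into the supremum bound) that the paper leaves implicit.
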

\begin{proof}
Apply Lemma \ref{g5r5fef25ft} to
$\log_bX=U[c,d)\log_b\beta=U[c\log_b\beta,d\log_b\beta)$.
\end{proof}

\begin{definition}[cf.~\cite{scaleinvariant}]\label{bspectrum}
Let $X\colon\Omega\to(0,\infty)$ be a random variable.
Its {\em Benford spectrum}, $B_X$, is defined as the set of those bases
$b>1$ for which $X$ is $b$-Benford.\end{definition}

\begin{problem}Characterize the sets $B_X$.\end{problem}

We already know that $B_X$ can be any interval $(1,b]$,
Theorem \ref{intervalspectrum},
and any discrete sequence $\{\sqrt[n]{b}\colon n\in\N\}$,
$b>1$, Theorem \ref{bhcbvtgtgve}.
The following theorem can be used to construct other examples.
Note that we have gotten rid of Whittaker's constraint
that $\ln XY$ has a density with bounded variation.

\begin{theorem}[cf.~last page in \cite{scaleinvariant}]
\label{gsgsgsgsgs}
Let $X,Y\colon\Omega\to(0,\infty)$ be independent random variables.
Then $B_X\cup B_Y\subset B_{XY}$.
\end{theorem}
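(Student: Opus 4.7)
The plan is to reduce everything to Whittaker's Criterion (Theorem \ref{charbases}) and then exploit independence through the multiplicativity of characteristic functions.

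First I would reformulate membership in the Benford spectrum in Fourier terms: $b \in B_X$ if and only if $\varphi_{\ln X}(2\pi n / \ln b) = 0$ for every $n \in \N$, and analogously for $Y$ and $XY$. This is exactly what Theorem \ref{charbases} provides, so the statement to prove becomes a statement about zeros of three related characteristic functions.

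Next I would use independence. Since $X$ and $Y$ are independent and strictly positive, $\ln X$ and $\ln Y$ are independent real-valued random variables, so by (\ref{phixy}),
$$\varphi_{\ln(XY)}(t) = \varphi_{\ln X + \ln Y}(t) = \varphi_{\ln X}(t)\,\varphi_{\ln Y}(t) \quad \text{for all } t \in \R.$$
Now fix any $b \in B_X$ and any $n \in \N$. Setting $t = 2\pi n / \ln b$, the first factor $\varphi_{\ln X}(2\pi n / \ln b)$ vanishes by Whittaker's Criterion applied to $X$, so the product vanishes, yielding $\varphi_{\ln(XY)}(2\pi n / \ln b) = 0$ for every $n \in \N$. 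Applying Theorem \ref{charbases} in the reverse direction, $b \in B_{XY}$. The argument is symmetric in $X$ and $Y$, so we also obtain $B_Y \subset B_{XY}$, and combining gives $B_X \cup B_Y \subset B_{XY}$.

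There is essentially no obstacle here once Theorem \ref{charbases} is in hand: the only thing to check carefully is that independence of $X$ and $Y$ indeed transfers to independence of $\ln X$ and $\ln Y$ (immediate, since $\ln$ is a Borel function on $(0,\infty)$) and that no unstated regularity hypothesis is needed — which is precisely the improvement over the bounded-variation assumption in \cite{scaleinvariant} that the fully general Whittaker's Criterion buys us. Thus the whole proof fits into a few lines.
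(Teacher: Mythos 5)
Your argument is correct and is essentially identical to the paper's own (first) proof: both reduce membership in the Benford spectrum to the vanishing of $\varphi_{\ln X}$ at the points $2\pi n/\ln b$ via Whittaker's Criterion (Theorem \ref{charbases}) and then invoke the multiplicativity of characteristic functions for the independent variables $\ln X$ and $\ln Y$ from (\ref{phixy}). The paper also records a second, alternative proof via Theorem \ref{filozof}, but your route is exactly its primary one.
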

\begin{proof}
This is a direct consequence of Whittaker's Criterion,
Theorem \ref{charbases}, because
since $\ln X,\ln Y$ are independent, the characteristic function of
$\ln X+\ln Y=\ln XY$
is the product of their respective characteristic functions, (\ref{phixy}).
\end{proof}\begin{proof}
Alternatively, this is a direct consequence of Theorem \ref{filozof},
because if $b\in B_X$ then $X=b^Z$ with $Z\bmod{1}\sim U[0,1)$
and $Y=b^W$ with $Z$ and $W$ being independent,
so $XY=b^{Z+W}$ with $(Z+W)\bmod{1}\sim U[0,1)$
and thus $b\in B_{XY}$.
\end{proof}

\section{Concluding remarks}

We should clearly distinguish between Benford's Law
as a kind of universal observation on the one hand and the particular
mathematical probability distribution possessed by
certain random variables on the other,
which is variously called Benford's law, Benford's property,
or logarithmic distribution.

Benford's law is about the logarithms of numbers to a chosen base
$b\in(1,\infty)$ being uniformly distributed modulo 1.
This translates into the so-called logarithmic distribution
of the significand on the interval $[1,b)$.
A set of numbers or a random variable may have
varying degrees of conformance to this logarithmic distribution
of its significand depending on the choice of base.

There is no mathematical reason to consider only integer bases.
The focus on first digits is also arbitrary
and should be considered only as a convenient choice of a histogram.

Once a base is chosen, scale-invariance seems to be the fundamental
property of the logarithmic distribution of the significand.
It is satisfied in the most general case when the random variable
under consideration is multiplied by any independent variable.
Moreover, the logarithmic distribution of the significand can be
characterized as the only one that is invariant under
multiplication by just one arbitrary
continuous independent random variable.

In contrast to scale-invariance,
it is a mistaken notion that a given random variable
must be Benford for all bases once it is Benford for a single base.
Many restraints on this notion have been discussed.
The perceived base-invariance of Benford's Law
as a stitistical observation must be due to its approximating
character because strictly speaking we should expect
conformance to Benford's Law to be dependent on the choice
of base. In practice, in certain natural conditions,
this dependence may be negligible within certain bounds.


\begin{thebibliography}{999999999}

\bibitem[Allaart]{invariantsum}{Pieter C. Allaart,
An invariant-sum characterization of Benford's law,
Journal of Applied Probability 34, 1997, 288-291.}

\bibitem[Benford]{benford}{Frank Benford,
The Law of Anomalous Numbers,
Proceedings of the American Philosophical Society 78(4), 1938, 551-572.}

\bibitem[Boyle]{appliedfourier}{Jeff Boyle,
An Application of Fourier Series to the Most Significant Digit Problem,
The American Mathematical Monthly 101(9), 1994, 879-886.}

\bibitem[Edwards]{fourier}{R.E. Edwards, Fourier Series: A Modern Introduction Volume 1,
Second Edition 1979.}

\bibitem[Feller]{feller}{William Feller,
Introduction to Probability Theory and Its Applications Volume 2, 1970.}

\bibitem[geo2002]{geo1}{R.~S.~Seaman,
The relevance of Benford's Law to background field errors in data assimilation,
Australian Meteorological Magazine 51(1), 2002, 25-33.}

\bibitem[geo2007]{geo2}{Mark J.~Nigrini, Steven J.~Miller,
Benford’s Law Applied to Hydrology Data---Results and Relevance to Other Geophysical Data,
Mathematical Geology 39(5), 2007, 469-490.}

\bibitem[geo2009]{geo3}{Silvia Docampo, María del Mar Trigo,
María Jesús Aira, Baltasar Cabezudo, Antonio Flores-Moya,
Benford’s law applied to aerobiological data and its
potential as a quality control tool,
Aerobiologia (2009) 25:275-283.}

\bibitem[geo2010]{geo4}{M.~Sambridge, H.~Tkalčić, A.~Jackson,
Benford's law in the Natural Sciences,
Geophysical Research Letters 37, 2010.}

\bibitem[geo2012a]{geo5}{A.~Geyer, J.~Martí, Applying Benford's law to volcanology,
Geology 40(4), 2012, 327-330.}

\bibitem[geo2012b]{geo6}{Gianluca Sottili, Danilo M.~Palladino, Biagio Giaccio, Paolo Messina,
Benford's Law in Time Series Analysis of Seismic Clusters,
Mathematical Geosciences 44(5), 2012, 619-634.}

\bibitem[Hamming]{hamming}{R.~W.~Hamming, On the Distribution of Numbers,
The Bell System Technical Journal 49(8), 1970, 1609-1625.}

\bibitem[Hill]{baseinvariance}{Theodore P. Hill, Base-Invariance Implies Benford's Law,
Proceedings of the American Mathematical Society 123(3), 1995, 887-895.}

\bibitem[Indian]{sarkar}{A. K. Adhikari and B. P. Sarkar,
Distribution of Most Significant Digit in Certain Functions Whose Arguments Are Random Variables,
The Indian Journal of Statistics 30(1/2), 1968, 47-58.}

\bibitem[Lolbert]{lolbert}{Tamás Lolbert,
On the non-existence of a general Benford's law,
Mathematical Social Sciences 55, 2008, 103-106.}

\bibitem[Newcomb]{newcomb}{Simon Newcomb,
Note on the Frequency of Use of the Different Digits in Natural Numbers,
American Journal of Mathematics 4(1), 1881, 39-40.}

\bibitem[Pinkham]{pinkham}{Roger S. Pinkham,
On the distribution of first significant digits,
The Annals of Mathematical Statistics 32(4), 1961, 1223-1230.}

\bibitem[Raimi]{raimi}{Ralph A. Raimi, The First Digit Problem,
The American Mathematical Monthly 83(7), 1976, 521-538.}

\bibitem[Ross]{ross}{Kenneth A.~Ross, Benford's Law, A Growth Industry,
The American Mathematical Monthly 118(7), 2011, 571-583.}

\bibitem[Schatte73]{verteilung}{Peter Schatte,
Zur Verteilung der Mantisse in der Gleitkommadarstellung einer Zufallsgröße,
Zeitschrift Für Angewandte Mathematik Und Mechanik 53(8), 1973, 553-565.}

\bibitem[Schatte81]{schatte1981}
{Peter Schatte, On Random Variables with Logarithmic Mantissa
Distribution Relative to Several Bases,
Elektronische Informationsverarbeitung und Kybernetik 17, 1981, 293-295.}

\bibitem[Schatte83]{sumsmodulo}{Peter Schatte,
On sums modulo $2\pi$ of independent random variables,
Mathematische Nachrichten 110, 1983, 243-262.}

\bibitem[Schatte98]{baseb}{Peter Schatte,
On Benford's law to variable base,
Statistics \& Probability Letters 37, 1998, 391-397.}

\bibitem[Survey]{basictheory}{Arno Berger, Theodore P. Hill, A basic theory of Benford's Law,
Probability Surveys 8, 2011, 1–126.}

\bibitem[Whittaker]{scaleinvariant}{James V.~Whittaker, On scale-invariant distributions,
SIAM Journal of Applied Mathematics 43(2), 1983, 257-267.}

\end{thebibliography}
\end{document}